\documentclass{article}

\usepackage[utf8]{inputenc}
\usepackage[margin=1in]{geometry}

\usepackage{amsmath}               
\usepackage{amsfonts,amssymb}              
\usepackage{amsthm}                
\usepackage{color}
\usepackage{enumerate}
\usepackage{hyperref}
\usepackage{bbm}
\usepackage{graphicx}

\newtheorem{theorem}{Theorem}[section]
\newtheorem{lemma}[theorem]{Lemma}
\newtheorem{proposition}[theorem]{Proposition}
\newtheorem{corollary}[theorem]{Corollary}
\newtheorem{definition}{Definition}
\theoremstyle{remark}
\newtheorem{example}[theorem]{Example}
\newtheorem{remark}{Remark}

\newcommand{\vf}{\varphi}

\newcommand{\R}{\mathbb R}

\newcommand{\toP}{\stackrel{P}{\longrightarrow}}
\newcommand{\Z}{\mathbb Z}
\newcommand{\E}{\mathbf {E}}
\newcommand{\Corr}{\mathrm{Corr}}
\newcommand{\pr}{\mathbb{P}}
\newcommand{\Cov}{\mathbf{Cov}}
\newcommand{\Var}{\mathbf{Var}}

\renewcommand{\Re}{\operatorname{Re}}

\newcommand{\ind}{\mathbbm{1}}
\newcommand{\eqd}{\stackrel{d}{=}}
\newcommand{\N}{\mathbb N}

\title{Prediction of random variables by excursion metric projections}
\author{Vitalii Makogin\thanks{Institute of Stochastics,
Ulm University, Germany; vitalii.makogin@uni-ulm.de}, Evgeny Spodarev\thanks{Institute of Stochastics,
Ulm University, Germany; evgeny.spodarev@uni-ulm.de}}
\date{\today}

\begin{document}

\maketitle

\begin{abstract}
We use the concept of excursions for the prediction of random variables without any moment existence assumptions. To do so, an excursion metric on the space of random variables  is defined which appears to be a kind of a weighted $L^1$-distance. Using equivalent forms of this metric and the specific choice of excursion levels, we formulate the prediction problem as a minimization of a certain target functional which involves the excursion metric.  Existence of the solution and weak consistency of the predictor are discussed. An application to the extrapolation of stationary heavy-tailed random functions illustrates the use of the aforementioned theory. Numerical experiments with the prediction of Gaussian, $\alpha$-stable and further heavy--tailed time series round up the paper.
\end{abstract}

{\bf Keywords}: {extrapolation, (linear) prediction, forecasting, excursion, level set, Gini metric, stationary random field, $\alpha$--stable random function, heavy tails, time series, statistical learning}.
 \\
 
{\bf AMS subject classification 2020}: {Primary 60G25, 62M20; Secondary 60G10, 60G52} \\




\section{Introduction} \label{sectIntro}

Let $Y:\Omega\to\R$ be a square integrable random variable defined on a probability space $(\Omega,{\cal F}, \pr)$, and let ${\cal G}\subset {\cal F}$ be a sub--$\sigma$--algebra generated by a family of random variables $\{Z_j\}$  which are observable.
The classical $L^2$--theory of prediction of random variables states that the best unbiased predictor of  $Y$ with respect to  ${\cal G}$ is given by the conditional expectation $\E(Y | {\cal G}) $ which 
is an orthogonal $L^2$--projection of $Y$ onto the space of ${\cal G}$--measurable square integrable random variables. But as far as $Y$ has no finite moments, no unified widely accepted prediction theory exists, to the best of our knowledge. Our paper is an attempt to create such theory which also applies to the finite variance case. Its main idea is the following.
Let $u\in \R$ be an excursion level chosen according to a finite measure $m(\cdot)$ on $\R$. For any two random variables $Y_1, Y_2:\Omega\to\R$  introduce the quantity
$$
E_m(Y_1,Y_2):=\E \int_\R \ind( \{ Y_1>u  \} \bigtriangleup   \{ Y_2>u  \} ) \, m(du)=\int_\R \pr( \{ Y_1>u  \} \bigtriangleup   \{ Y_2>u  \} ) \, m(du),
$$
which is (by Fubini's theorem) a  $m$-weighted average probability of symmetric difference of excursions of $Y_1$ and $Y_2$ over $u\in\R$. Then, we say that $Z$ is a prediction of a random variable $Y$ onto the $\sigma$--algebra ${\cal G}$ introduced above if 
$$
Z=\mbox{argmin}_{Y_0}  E_m(Y,Y_0),
$$
whenever this minimum (taken over all ${\cal G}$-measurable random variables $Y_0$) exists and is unique. Sometimes it is also plausible to add more constraints to the geometry of our projection space saying that, additionally to ${\cal G}$-measurability, $Y_0\eqd Y$ (here, $\eqd$ means the equality in law) or that $Y_0$ is a linear combination of  $\{Z_j\}$. Apparently, the above solution $Z$, its existence and uniqueness may heavily depend on the choice of measure $m(\cdot)$. A natural candidate for this would be the distribution of $Y$ as we explain it later.	
As it is shown in Theorem \ref{thm:metric}, $E_m$ is a metric on the space of random variables whenever the distribution function of $m(\cdot)$ is strictly increasing.

The intuition behind the use of the new metric is the following. Assume that a stationary heavy--tailed time series $\{Y_t,t\in\R\}$ is observed at locations $t_1,\ldots, t_n$ in a compact window $W\subset \R$. As proposed in \cite{Dasetal22}, the linear predictor 
$\widehat{Y}_t=\sum\limits_{j=1}^n \lambda_j Y_{t_j}$, $t\not\in \{ t_1,\ldots, t_n \}$, is a minimizer of the functional
$$
 \int_\R \E \left[v_{1}\left(A_{Y}(u) \Delta A_{\widehat{Y}}(u)\right)\right] \, m(du)=\int\limits_W E_m(Y_t,\widehat{Y}_t) \, dt$$
with respect to the choice of weights $\lambda_1,\ldots,\lambda_n$ subject to the constraint $\widehat{Y}_t\eqd Y_t$. The above equality holds by Fubini's theorem, whereas the left hand side term is the mean length of the symmetric difference of excursion sets
$A_{Y}(u) :=\{ t\in W: Y_t>u  \}$ and $A_{\widehat{Y}}(u):=\{ t\in W: \widehat Y_t>u  \}$ averaged over the levels $u\in\R$ picked up according to the measure $m(\cdot)$. The term $\E \left[v_{1}\left(A_{Y}(u) \Delta A_{\widehat{Y}}(u)\right)\right] $ is called {\it error-in-measure} and quantifies the prediction error measured by the symmetric difference of excursions. Here and in what follows,  $v_1(\cdot)$ is the Lebesgue measure on $\R$. In view of the said above, $E_m$ will be named the { \it excursion metric}. In the previous literature, the minimization of probability metrics was used mainly in context of optimal mass transportation and parameter inference, see e.g. \cite{rachev2013methods}.

The paper is organised as follows: in Section \ref{sectExcursMetric} the properties of $E_m$ are studied. It is shown that $E_m$ coincides with the so--called {\it separation (pseudo) metric }\cite{Taylor84} whenever $m$ is a probability measure.
The maximal value attained by $E_m$ with respect to the choice of measure $m$ as well as implications of the choice $m=\pr_Y$ are given in Section \ref{sectOptChoice}.
Restricted to the space of random variables $Y$ with the same absolutely continuous distribution $F$,  the metric $E_{F}$ turns to be distribution--free depending only on bivariate copulas. We call this metric (in analogy to Gini coefficient from econometrics \cite{Gast72,Yitz13}) a {\it Gini metric}. It properties are investigated within the same section.
The excursion metric with $ m=\pr_Y$ is applied to the prediction of random variable $Y$  in Section \ref{sectPredict}. There, we give three possible forms of minimization problems 
leading to such forecasting. Existence of the solution and consistency of the predictor are discussed in Section \ref{sect:Exist}.  A special case of extrapolating heavy-tailed time series is considered in Section \ref{sectExtrapol}.  Numerical examples predicting Gaussian, $\alpha$--stable   and  autoregressive heavy--tailed stationary time series follow in Section \ref{sect:Num}.


\section{Excursion metric and its properties} \label{sectExcursMetric}

Let $L^0(\Omega, {\cal F}, \pr)$ be the set of all real--valued random variables define on a probability space   $(\Omega, {\cal F}, \pr)$.

We introduce the {\it excursion (pseudo)metric} mentioned in Section \ref{sectIntro} in a slightly different (but equivalent) form:
\begin{definition}
\label{Em:def}
Let $m$ be a finite non-negative measure on $\R,$ and $Y_1, Y_2 \in L^0(\Omega, {\cal F}, \pr) $. The excursion metric $E_m$ is given by 
\begin{equation}
\label{Em:def:eq}
    E_m(Y_1,Y_2):=\int_\R \left(\pr(Y_1>u)+\pr(Y_2>u)
-2\pr(Y_1>u,Y_2>u)\right) m(du).
\end{equation}
\end{definition}
In order to understand when functional $E_m$ is indeed a metric on the space $L^0(\Omega,{\cal F}, \pr)$, we present several equivalent forms of \eqref{Em:def:eq}.

In the sequel, we will need the distribution function of measure $m$ given by 
 $$F_U(x):=\int_{-\infty}^xm(du), x\in\R,\quad  F_U(x-)=\lim_{y\to x-}F_U(y),$$
and the notation
$$\Delta_{Y_1,Y_2}(u):=\pr(Y_1>u)+\pr(Y_2>u)
-2\pr(Y_1>u,Y_2>u).$$
It clearly holds  $E_m(Y_1,Y_2)=\int_\R \Delta_{Y_1,Y_2}(u) m(du).$
Denote by $Y_1\vee Y_2$ ($Y_1\wedge Y_2$) the maximum (minimum, resp.) of the random variables $Y_1$  and $Y_2$.
\begin{remark}
Let $F_1$ and $F_2$ be the distribution functions of $Y_1$ and $Y_2,$ respectively, and $C$ be the copula of $(Y_1,Y_2).$ Then  one writes
\begin{align}
\label{delta:eq0}\Delta_{Y_1,Y_2}(u)&=\pr(Y_1\vee Y_2> u)-\pr(Y_1\wedge Y_2> u)
=\pr(Y_1\wedge Y_2\leq u)-\pr(Y_1\vee Y_2\leq  u)   \\
&=\pr(Y_1\leq u)+\pr(Y_2\leq u)-2\pr(Y_1\leq u,Y_2\leq u)  \nonumber \\ 
&=F_1(u)+F_2(u)-2\pr(Y_1\vee Y_2\leq u)  =F_1(u)+F_2(u)-2 C(F_1(u),F_2(u)), \label{delta:eq2}
\end{align}
where the last relation follows from Sklar's theorem \cite{Sklar59}.
Moreover, it follows
\begin{align*}
    \Delta_{Y_1,Y_2}(u)&=\frac{1}{2}\pr(Y_1>u)+\frac{1}{2}\pr(Y_2>u)-\pr(Y_1>u,Y_2>u)\\
    &+\frac{1}{2}\pr(Y_1\leq u)+\frac{1}{2}\pr(Y_2\leq u)-\pr(Y_1\leq u,Y_2\leq u)\\
    &=1-\pr((Y_1-u)(Y_2-u)\geq 0)=\pr((Y_1-u)(Y_2-u)< 0)\in [0,1].
\end{align*}
Thus, it follows the relation
$0\leq E_m(Y_1,Y_2)\leq m(\R).$ \end{remark}

Without loss of generality, we may thus divide both sides of the last inequality by $ m(\R)$ and consider $m$ to be a probability measure which yields
$0\leq E_m(Y_1,Y_2)\leq 1.$ 
Let $U$ be a random variable with probability law $m$ which is independent of $Y_1$, $Y_2$. It can be interpreted as a random excursion level which we choose to build the metric $E_m$.  
\begin{lemma}\label{lemmFormEm}
Let $m$ be a probability measure on $\R$ with c.d.f.  $F_U(x)=\int_{-\infty}^x m(dy),$ $x\in\R.$
Then
 \begin{align}
 \label{Em:eq1}
 E_m(Y_1,Y_2)&=\E|F_U(Y_2-)-F_U(Y_1-)|.
 \end{align}
\end{lemma}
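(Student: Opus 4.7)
The plan is to reduce everything to Fubini's theorem applied to a suitable indicator, and then to evaluate the resulting $m$-measure of a half-open random interval in terms of left limits of $F_U$.

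First I would invoke the preceding Remark to rewrite $\Delta_{Y_1,Y_2}(u)$ in the probabilistic form
$$\Delta_{Y_1,Y_2}(u)=\pr(Y_1\wedge Y_2\leq u)-\pr(Y_1\vee Y_2\leq u)=\E\,\ind(Y_1\wedge Y_2\leq u<Y_1\vee Y_2),$$
which is exactly \eqref{delta:eq0}. Since the integrand is non-negative, Fubini's theorem lets me exchange the expectation and the integral against $m$:
$$E_m(Y_1,Y_2)=\E\int_\R\ind(Y_1\wedge Y_2\leq u<Y_1\vee Y_2)\,m(du)=\E\,m\bigl([Y_1\wedge Y_2,\,Y_1\vee Y_2)\bigr).$$

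Next I would evaluate the inner $m$-measure. From the definitions $F_U(x)=m((-\infty,x])$ and $F_U(x-)=m((-\infty,x))$, the identity $m([a,b))=F_U(b-)-F_U(a-)$ holds for every $a\leq b$. Applied pointwise in $\omega$ with $a=Y_1\wedge Y_2$ and $b=Y_1\vee Y_2$, this gives
$$m\bigl([Y_1\wedge Y_2,\,Y_1\vee Y_2)\bigr)=F_U((Y_1\vee Y_2)-)-F_U((Y_1\wedge Y_2)-)=|F_U(Y_1-)-F_U(Y_2-)|,$$
where the last equality uses the monotonicity of $F_U$ so that it commutes with $\min$ and $\max$. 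Taking expectation yields \eqref{Em:eq1}.

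There is no real obstacle here; the only point that needs care is matching the half-open interval $[a,b)$ to the strict inequality on the right in the indicator, which is exactly what forces the left limits $F_U(\cdot-)$ to appear (rather than $F_U(\cdot)$). The case $Y_1=Y_2$, and in particular atoms of the laws of $Y_1,Y_2$, cause no issue because the indicator and the interval are then empty.
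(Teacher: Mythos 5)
Your proof is correct and follows essentially the same route as the paper: both start from \eqref{delta:eq0}, apply Fubini/Tonelli to swap $\E$ and the $m$-integral, and identify the resulting $m$-mass with left limits $F_U(\cdot-)$, the only cosmetic difference being that you evaluate $m\bigl([Y_1\wedge Y_2,\,Y_1\vee Y_2)\bigr)$ directly while the paper integrates the two indicators $\ind\{u<Y_1\vee Y_2\}$ and $\ind\{u<Y_1\wedge Y_2\}$ separately.
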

\begin{proof}
 We have from relation \eqref{delta:eq0} that 
\begin{align*}
    E_m(Y_1,Y_2)&=\int_{\R}\E \left(\mathbbm{1}\{u<Y_1\vee Y_2\}-\mathbbm{1}\{u<Y_1\wedge Y_2\}\right) m(du)\\
    &=\E \left(F_U(Y_1\vee Y_2-)-F_U(Y_1\wedge Y_2-) \right)=\E|F_U(Y_2-)-F_U(Y_1-)|.
 \end{align*}
\end{proof}
Equation \eqref{Em:eq1} can be interpreted as a probability that $U$ separates $Y_1$ and $Y_2$:
$$
E_m(Y_1,Y_2)=\pr( Y_1\wedge Y_2 \le U<  Y_1\vee Y_2 ).
$$
Seen this way, $E_m$ coincides with the {\it separation (pseudo)metric} introduced by M. Taylor \cite{Taylor84}. The first part of the following corollary is also contained in \cite[Remark 1]{Taylor84}:

\begin{corollary} \label{corEm}
If $F_U$ is continuous then it holds $F_U(x-)=F_U(x),x\in\R,$ and
\begin{equation} E_m(Y_1,Y_2)=\E|F_U(Y_2)-F_U(Y_1)|. \label{eqE_m1}
 \end{equation}
If, in addition, $m$ is absolutely continuous with density $\psi$
then 
\begin{equation*}
 E_m(Y_1,Y_2)=\E\left|\int_{Y_1}^{Y_2}\psi(u)du\right|=\frac{1}{2} \E \left[  |Y_1-Y_2|\int_{-1}^{1}\psi\left(s\frac{|Y_1-Y_2|}{2}+\frac{Y_1+Y_2}{2}\right)ds \right].
 \end{equation*}
\end{corollary}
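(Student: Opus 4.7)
The plan is to derive both parts as direct consequences of Lemma \ref{lemmFormEm} combined with elementary properties of the distribution function $F_U$ under the added assumptions.

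For the first part, I would simply invoke Lemma \ref{lemmFormEm}, which gives $E_m(Y_1,Y_2)=\E|F_U(Y_2-)-F_U(Y_1-)|$. Since any continuous distribution function satisfies $F_U(x-)=F_U(x)$ for all $x\in\R$ (the left limit equals the value at the point), the identity \eqref{eqE_m1} follows immediately by substituting. No further work is required here.

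For the second part, the assumption that $m$ has density $\psi$ yields $F_U(b)-F_U(a)=\int_a^b \psi(u)\,du$ for all reals $a,b$. Applied pathwise to $a=Y_1(\omega)$, $b=Y_2(\omega)$, this gives
$$ F_U(Y_2)-F_U(Y_1)=\int_{Y_1}^{Y_2}\psi(u)\,du,$$
so taking absolute values and expectations combined with the first part produces the first of the two claimed equalities. For the second equality I would perform the affine change of variables $u=\frac{Y_1+Y_2}{2}+s\cdot\frac{Y_2-Y_1}{2}$, which has Jacobian $\frac{Y_2-Y_1}{2}$ and maps $s\in[-1,1]$ bijectively onto the interval with endpoints $Y_1,Y_2$. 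Pulling out the absolute value gives the factor $\frac{|Y_1-Y_2|}{2}$, and the symmetry of the integration interval $[-1,1]$ under $s\mapsto -s$ shows that replacing $\frac{Y_2-Y_1}{2}$ by $\frac{|Y_1-Y_2|}{2}$ inside $\psi$ leaves the integral unchanged. Taking expectations completes the proof.

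There is essentially no obstacle here; the only minor subtlety is the need to handle the sign of $Y_2-Y_1$ in the substitution, which is dealt with cleanly by the reflection symmetry of the resulting $[-1,1]$-integral, so one does not need to split into cases $Y_1<Y_2$ and $Y_1>Y_2$ explicitly.
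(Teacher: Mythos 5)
Your argument is correct and follows exactly the route the paper intends: the corollary is stated as an immediate consequence of Lemma \ref{lemmFormEm} (continuity gives $F_U(x-)=F_U(x)$, absolute continuity gives the integral form, and the affine substitution with the reflection $s\mapsto -s$ handles the sign of $Y_2-Y_1$). The only point worth making explicit is that the nonnegativity of $\psi$ lets you drop the absolute value around the $s$-integral after factoring out $|Y_1-Y_2|/2$; with that noted, the proof is complete and matches the paper's (implicit) derivation.
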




The next theorem was also proven (under different assumptions) in \cite[Theorem 2]{Taylor84}:

\begin{theorem} \label{thm:metric}
Let $X_S$ be the space of random variables with support $S\subseteq\R.$ If $F_U$ is strictly increasing on $S,$ then $E_m$ is a metric on $X_S\times X_S.$
\end{theorem}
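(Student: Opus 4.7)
The plan is to verify the four metric axioms on $X_S \times X_S$, leaning on the clean representation $E_m(Y_1,Y_2) = \E |F_U(Y_2-)-F_U(Y_1-)|$ from Lemma \ref{lemmFormEm}. Non-negativity is immediate since the integrand is non-negative, and symmetry is immediate from the absolute value (both are already visible in Definition \ref{Em:def} via $\Delta_{Y_1,Y_2}\geq 0$ and its symmetry in $Y_1,Y_2$).

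For the triangle inequality, I would apply the pointwise triangle inequality
$$|F_U(Y_3-)-F_U(Y_1-)| \le |F_U(Y_3-)-F_U(Y_2-)| + |F_U(Y_2-)-F_U(Y_1-)|$$
and take expectations, using linearity of $\E$ and the representation \eqref{Em:eq1} to recognise the right-hand side as $E_m(Y_2,Y_3)+E_m(Y_1,Y_2)$. This step is essentially a one-liner and is not where the hypothesis on $F_U$ plays a role.

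The real content is the identity of indiscernibles. The trivial direction $Y_1=Y_2$ a.s.\ $\Rightarrow E_m(Y_1,Y_2)=0$ follows from \eqref{Em:eq1}. For the converse, $E_m(Y_1,Y_2)=0$ combined with $|F_U(Y_2-)-F_U(Y_1-)|\geq 0$ forces $F_U(Y_1-)=F_U(Y_2-)$ almost surely. To conclude $Y_1=Y_2$ a.s., I would show that the map $x\mapsto F_U(x-)$ is injective on $S$: suppose $x_1<x_2$ lie in $S$ with $F_U(x_1-)=F_U(x_2-)$; monotonicity of $F_U$ sandwiches $F_U(x_1)$ between these two equal values, so $F_U$ is constant on $[x_1,x_2)$, contradicting the strict monotonicity of $F_U$ on $S$ as soon as any point of $S$ lies in $[x_1,x_2)$ (in particular $x_1$ itself together with any intermediate point, which the hypothesis forbids). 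Applying this injectivity pointwise on the event $\{Y_1,Y_2\in S,\, F_U(Y_1-)=F_U(Y_2-)\}$, which has full probability, yields $Y_1=Y_2$ a.s.

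The main obstacle is the last step, because $F_U$ is a right-continuous CDF while the representation in Lemma \ref{lemmFormEm} uses the left-continuous modification $F_U(\cdot -)$; one must translate the strict monotonicity of $F_U$ on $S$ into injectivity of $F_U(\cdot-)$ on $S$, which is the delicate point where the hypothesis enters. The other three axioms are purely formal consequences of \eqref{Em:eq1}.
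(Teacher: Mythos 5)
Your proposal follows the paper's own route essentially verbatim: the representation $E_m(Y_1,Y_2)=\E|F_U(Y_2-)-F_U(Y_1-)|$ from Lemma \ref{lemmFormEm}, the pointwise triangle inequality plus monotonicity of expectation, and the identity-of-indiscernibles step via $F_U(Y_1-)=F_U(Y_2-)$ a.s. The paper disposes of the last step in one line (``Since $F_U$ is strictly increasing on $S$, $\pr(Y_1=Y_2)=1$''), whereas you try to make it explicit by proving that $x\mapsto F_U(x-)$ is injective on $S$ — and that is exactly where your argument has a gap.

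Your injectivity argument shows $F_U$ is constant on $[x_1,x_2)$ and then claims a contradiction with strict monotonicity of $F_U$ on $S$, but the contradiction requires a \emph{second} point of $S$ in $[x_1,x_2)$ besides $x_1$; the parenthetical ``which the hypothesis forbids'' does not supply one, and for general $S\subseteq\R$ none need exist. Concretely, take $S=\{0,1\}$ and $m=\tfrac12(\delta_{-1}+\delta_{1})$: then $F_U(0)=\tfrac12<1=F_U(1)$, so $F_U$ is strictly increasing on $S$, yet $F_U(0-)=F_U(1-)=\tfrac12$, so the left-limit map is not injective on $S$; with $Y_1\sim\mathrm{Bernoulli}(1/2)$ on $\{0,1\}$ and $Y_2=1-Y_1$ one even gets $E_m(Y_1,Y_2)=0$ while $Y_1\neq Y_2$ a.s. So the step cannot be repaired for arbitrary $S$ without strengthening the hypothesis: your argument is valid when $S$ is an interval (or, more generally, when every pair $x_1<x_2$ in $S$ has either another point of $S$ or positive $m$-mass in $[x_1,x_2)$), which is evidently the intended setting. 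To be fair, the paper's proof glosses over precisely the same left-limit subtlety that you correctly identified as the delicate point; but as written your justification asserts rather than proves the key fact, and the assertion is false for general $S$.
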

\begin{proof}
The symmetry of $E_m$ is trivial. The triangle inequality  follows for arbitrary $Y_1,Y_2,Y_3\in X_S$ from 
\begin{align*}
    &E_m(Y_1,Y_2)= \E|F_U(Y_2-)-F_U(Y_1-)|\\
    &\leq \E|F_U(Y_2-)-F_U(Y_3-)|+\E|F_U(Y_3-)-F_U(Y_1-)|=E_m(Y_1,Y_3)+E_m(Y_3,Y_2).
\end{align*}
Let for some $Y_1,Y_2\in X_S:$ $\E|F_U(Y_2-)-F_U(Y_1-)|=0,$ then $F_U(Y_1-)=F_U(Y_2-)$ a.s. Since $F_U$ is strictly increasing on $S,$ $\pr(Y_1=Y_2)=1$. Thus, $E_m$ is a metric.
\end{proof}
It can be easily shown  that $E_m$ given in \eqref{eqE_m1}  (with a specific choice of $U:(0,1) \to \R$ being a homeomorphism) coincides with the metric $d_{h,p}$ for $p=1$, $h=U$ from \cite{Taylor85} which metrizes the weak convergence in the space of distribution functions.

\section{Optimal choice of a weighting measure $m$} \label{sectOptChoice}

In this section, we assume that random variables $Y_1$ and $Y_2$ are absolutely continuously distributed with support $S\subseteq\R$, cumulative distribution functions (c.d.f.'s) $F_{1}$, $F_2$ and copula $C(\cdot,\cdot)$.
 Let $D$ be the space of all probability measures on $\R$. 
\begin{theorem}
The maximum
 $$\max_{m\in D} E_m(Y_1,Y_2)$$
 is attained at  a measure $m$ with c.d.f. $F_U^*(x)=\mathbbm{1}\{u^*\leq x\},$ where $$u^*=\arg\max_{u\in\R}(F_1(u)+F_2(u)-2 C(F_1(u),F_2(u))).$$

If additionally $Y_1\stackrel{d}{=}Y_2$ then 
\begin{equation}\label{eq:maxEm}
\max_{m\in D} E_m(Y_1,Y_2)=2\max_{x\in[0,1]}(x-C(x,x)),
\end{equation}
whereas $u^*=F_1^{-1}(x^*)$ and  $x^*=\arg\max_{x\in[0,1]} (x-C(x,x)).$
\end{theorem}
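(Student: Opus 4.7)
The plan is to reduce the optimization to a pointwise problem: since $E_m(Y_1,Y_2)=\int_\R \Delta_{Y_1,Y_2}(u)\,m(du)$ and $\Delta_{Y_1,Y_2}(u)\in[0,1]$, for every probability measure $m\in D$ one has the trivial bound
\[
E_m(Y_1,Y_2)\ \le\ \sup_{u\in\R}\Delta_{Y_1,Y_2}(u),
\]
and equality is attained by any Dirac measure concentrated at a maximizer of $\Delta_{Y_1,Y_2}$. So the entire statement follows once we show that the supremum of $\Delta_{Y_1,Y_2}$ is attained at some finite $u^*\in\R$.

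To verify attainment, I would invoke formula \eqref{delta:eq2}, $\Delta_{Y_1,Y_2}(u)=F_1(u)+F_2(u)-2C(F_1(u),F_2(u))$. Under absolute continuity, $F_1,F_2$ are continuous, and every copula is (Lipschitz) continuous on $[0,1]^2$, so $u\mapsto\Delta_{Y_1,Y_2}(u)$ is continuous. Moreover, as $u\to-\infty$ both $F_1(u),F_2(u)\to 0$ and $C(0,0)=0$, while as $u\to+\infty$ both tend to $1$ and $C(1,1)=1$; in both cases $\Delta_{Y_1,Y_2}(u)\to 0$. Hence $\Delta_{Y_1,Y_2}$ attains its maximum on $\R$ at some point $u^*$, and choosing $m$ with c.d.f.\ $F_U^*(x)=\mathbbm{1}\{u^*\le x\}$ (i.e.\ $m=\delta_{u^*}$) gives $E_m(Y_1,Y_2)=\Delta_{Y_1,Y_2}(u^*)$, which matches the upper bound.

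For the second assertion, substitute $F_1=F_2=F$ into \eqref{delta:eq2} to get $\Delta_{Y_1,Y_2}(u)=2\bigl(F(u)-C(F(u),F(u))\bigr)$. Since $F$ is continuous, its range is a (possibly degenerate) interval whose closure is $[0,1]$; because the map $x\mapsto x-C(x,x)$ vanishes at $x=0$ and $x=1$ and is continuous on $[0,1]$, changing variables $x=F(u)$ yields
\[
\sup_{u\in\R}\Delta_{Y_1,Y_2}(u)\ =\ 2\max_{x\in[0,1]}\bigl(x-C(x,x)\bigr),
\]
which gives \eqref{eq:maxEm}. Finally, if $x^*=\arg\max_{x\in[0,1]}(x-C(x,x))$, then $u^*=F_1^{-1}(x^*)$ (with the convention that $F_1^{-1}$ denotes the usual generalized inverse) realizes the maximum of $\Delta_{Y_1,Y_2}$ on $\R$.

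The only mild subtlety is the attainment of the supremum: this is where the absolute continuity of $Y_1,Y_2$ (ensuring continuity of $F_1,F_2$) and the boundary behavior of copulas are essential. Otherwise, if $F_1$ or $F_2$ had jumps, one would need to work with $F_U$ of the form $\mathbbm{1}\{u^*<x\}$ or $\mathbbm{1}\{u^*\le x\}$ according to one-sided limits of $\Delta_{Y_1,Y_2}$, but under the stated assumptions no such care is needed.
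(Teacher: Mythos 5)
Your proposal is correct and follows essentially the same route as the paper's proof: bound $E_m(Y_1,Y_2)=\int_\R\Delta_{Y_1,Y_2}(u)\,m(du)$ by the maximum of $\Delta_{Y_1,Y_2}$, note that a Dirac measure at a maximizer $u^*$ attains it, and in the exchangeable case pass to $2\max_{x\in[0,1]}(x-C(x,x))$ via the substitution $x=F_1(u)$. The only difference is that you explicitly justify attainment of the supremum (continuity of $\Delta_{Y_1,Y_2}$ and its vanishing at $\pm\infty$, density of the range of $F_1$ in $[0,1]$), details which the paper asserts without proof, so your write-up is if anything slightly more complete.
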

\begin{proof}
Recall from \eqref{delta:eq2} that
    $\Delta_{Y_1,Y_2}(u)=F_1(u)+F_2(u)-2 C(F_1(u),F_2(u))\in [0,1].$
Therefore, there exists, not necessarily unique, 
$$u^*=\arg \max_{u\in \R} \left[F_1(u)+F_2(u)-2 C(F_1(u),F_2(u))\right].$$
Then 
$$
E_m(Y_1,Y_2)=\int_\R \Delta_{Y_1,Y_2}(u)  \, m(du)\leq F_1(u^*)+F_2(u^*)-2C(F_1(u^*),F_2(u^*))=\int_\R \Delta_{Y_1,Y_2}(u) \delta_{u^*}(du).
$$
If $Y_1\stackrel{d}{=}Y_2$ then $F_1=F_2$ and hence 
$$2 \max_{u\in \R} \left[F_1(u)-C(F_1(u),F_1(u))\right]= 2 \max_{x\in [0,1]} \left[x-C(x,x)\right],$$
since $F_1$ is non-decreasing.
\end{proof}

\begin{corollary} \label{corMaxValofE_m}
Let the random vector $(Y_1,Y_2)$ have a density function $p:\R^2\to\R_+$ with $Y_1\stackrel{d}{=}Y_2$.  Then $u^*$ satisfies the equation
$$\int_{u^*}^{+\infty} p(y,u^*)dy=\int_{-\infty}^{u^*} p(u^*,y)dy.$$
If  $p$ is additionally unimodal  and symmetric around its mode  $(\mu,\mu)$ then $u^*=\mu,$  $x^*=0.5,$
and $$\max_{m\in D} E_m(Y_1,Y_2)=1-2C(0.5,0.5).$$ 
\end{corollary}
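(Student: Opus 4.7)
The strategy is to reduce the maximization of $\Delta_{Y_1,Y_2}(u)$ from the theorem to a first--order condition in $u$, and then, in the symmetric case, to verify the guess $u=\mu$ by a change of variables.

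Since $Y_1\eqd Y_2$ gives $F_1=F_2=:F$, I would first rewrite
$$\Delta_{Y_1,Y_2}(u)=2\bigl(F(u)-\pr(Y_1\leq u,Y_2\leq u)\bigr)=2\pr(Y_1>u,\,Y_2\leq u),$$
expressing the probability on the right as $\int_u^{\infty}\!\int_{-\infty}^{u}p(y_1,y_2)\,dy_2\,dy_1$. Applying the Leibniz rule to this iterated integral (differentiating with respect to both the outer lower limit and the inner upper limit, both equal to $u$) yields
$$\tfrac12\,\Delta_{Y_1,Y_2}'(u)=\int_u^{\infty}p(y,u)\,dy-\int_{-\infty}^{u}p(u,y)\,dy.$$
Because $\Delta_{Y_1,Y_2}$ is continuous, bounded, and vanishes as $u\to\pm\infty$, its supremum is attained at an interior point $u^*$ where $\Delta_{Y_1,Y_2}'(u^*)=0$, which is exactly the displayed equation.

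For the second part I would plug $u=\mu$ into that equation. Using $p(\mu+a,\mu+b)=p(\mu-a,\mu-b)$ and the substitution $y=\mu\pm t$, I obtain
$$\int_\mu^\infty p(y,\mu)\,dy=\int_0^\infty p(\mu-t,\mu)\,dt=\int_{-\infty}^\mu p(s,\mu)\,ds=\tfrac12 f_2(\mu),$$
and analogously $\int_{-\infty}^\mu p(\mu,y)\,dy=\tfrac12 f_1(\mu)$; since $Y_1\eqd Y_2$, the two sides agree, so $u=\mu$ solves the first--order condition. Marginal symmetry about $\mu$ then gives $x^*=F(\mu)=\tfrac12$, and substituting into \eqref{eq:maxEm} yields $\max_{m\in D}E_m(Y_1,Y_2)=2(\tfrac12-C(\tfrac12,\tfrac12))=1-2C(0.5,0.5)$.

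The delicate point will be ensuring that this critical point is truly the global maximizer and not a saddle or a local minimum. I would handle this by exploiting the symmetry $\Delta_{Y_1,Y_2}(\mu+t)=\Delta_{Y_1,Y_2}(\mu-t)$, obtained via the change of variables $(y_1,y_2)\mapsto(2\mu-y_1,2\mu-y_2)$ inside $\pr(Y_1>u,Y_2\leq u)$, together with $\Delta_{Y_1,Y_2}(\pm\infty)=0$: unimodality of $p$ rules out additional interior critical points of $\Delta_{Y_1,Y_2}$, so the unique critical point of a nonnegative function vanishing at infinity must be its global maximizer.
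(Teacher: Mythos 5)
Your argument is correct and essentially the paper's own: both reduce the problem to the first--order condition for the double integral $\pr(Y_1>u,\,Y_2\le u)$ (the paper differentiates the mirror quantity $\pr(Y_1\le u,\,Y_2>u)$, which equals it since $Y_1\eqd Y_2$) via the Leibniz rule, and then verify that $u=\mu$ satisfies it using the point symmetry of $p$ about $(\mu,\mu)$, yielding $x^*=1/2$ and $\max_{m\in D}E_m(Y_1,Y_2)=1-2C(0.5,0.5)$. Your closing remark on why the critical point is the global maximizer actually goes a step beyond the paper, which simply checks the stationarity equation at $\mu$ and concludes.
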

\begin{proof} It holds
\begin{align*}
    &\max_{x\in[0,1]}(x-C(x,x))=\max_{u\in\R}(F(u)-C(F(u),F(u)))\\
    &=\max_{u\in \R}\left(\int_\R\int_{-\infty}^u p(y_1,y_2)dy_1dy_2-\int_{-\infty}^u\int_{-\infty}^u p(y_1,y_2)dy_1dy_2\right)\\
    &=\max_{u\in \R}\int_u^{+\infty}\int_{-\infty}^u p(y_1,y_2)dy_1dy_2.
\end{align*}
The maximum is reached on an extremal point $u$ such that
$$\frac{d}{du}\int_u^{+\infty}\int_{-\infty}^u p(y_1,y_2)dy_1dy_2=0,$$ or 
$$\int_u^{+\infty} p(y,u)dy=\int_{-\infty}^{u} p(u,y)dy.$$

Now let $p$ be unimodal and symmetric around its mode $(\mu,\mu),$ then
\begin{align*}
    &\int_\mu^{+\infty} p(y,\mu)dy=\int_{-\infty}^\mu p(y,\mu)dy=\frac{1}{2} \int_\R p(y,\mu)dy,\\
    &\int_{-\infty}^\mu p(\mu,y)dy=\int_\mu^{+\infty} p(\mu,y)dy=\frac{1}{2} \int_\R p(\mu,y)dy=\frac{1}{2} \int_\R p(y,\mu)dy.
\end{align*}
Thus, the maximum is reached at $u^*=\mu.$
\end{proof}

\begin{remark}
Although the maximum of $E_m$ is reached on $m=\delta_{\{u^*\}},$
 $E_{\delta_{\{u^*\}}}$ is not a metric but just a pseudo metric.

\end{remark}

Which choice of  $m$ is preferable to keep $E_m$ a metric which is relatively easy to compute, infer and interpret? 
 We know that $F_U$ should be strictly increasing on support $S.$  If we take $F_U=F_1$ then  it follows from the proof of Lemma \ref{lemmFormEm} that 
\begin{align}
E_{F_1}(Y_1,Y_2) &=    \E F_1 (Y_1\vee Y_2)-\E F_1 (Y_1\wedge Y_2) \nonumber \\
    &=2\E F_1 (Y_1\vee Y_2)-\E F_1 (Y_1)-\E F_1 (Y_2)=2\E F_1 (Y_1\vee Y_2)-\E F_1 (Y_2)-\frac{1}{2}, \label{EF1:eq1}
\end{align}
since it holds $ F_1 (Y_1)\sim U(0,1)$ with expected value $1/2$. $E_{F_1}$ is a metric on the space of all random variables with absolutely continuous distributions on support $S$. 

 If additionally $Y_1\stackrel{d}{=}Y_2,$ we get 
$$E_{F_1}(Y_1,Y_2)=2\E F_1(Y_1\vee Y_2)-1=2\E (F_1(Y_1)\vee F_1(Y_2))-1$$
from equation \eqref{EF1:eq1} since $F_1$ is strictly increasing. It follows from \cite[p. 68]{DurSem16} that $$C(x,x)=\pr( F_1(Y_1) \vee F_1(Y_2) \le x  ), \quad x\in[0,1].$$ 
Using relation  \eqref{delta:eq2} one writes after the substitution $x=F_1(u)$ that
 \begin{equation}\label{eq:E_mExp}
E_{F_1}(Y_1,Y_2)=2\int_{\R}\left[F_1(u)-C(F_1(u),F_1(u))\right]d F_1(u)=2\int_{0}^1\left[x-C(x,x)\right]dx=1-2 \int_0^1 C(x,x) \, dx.
\end{equation}
Since the term
$ 
2\int_{0}^1\left[x-C(x,x)\right]dx
$ is equal to the {\it Gini coefficient} of the {\it  Lorenz curve} $$\{  (x, C(x,x)), \; x\in[0,1]  \}$$ in case of a convex $ \{  C(x,x), \; x\in[0,1]  \}$, we come to the following definition:

\begin{definition}
Let $L_{F_1}$ be a space of random variables  with absolutely continuous c.d.f. $F_1$. The metric $G=E_{F_1}$ given by
\begin{equation*}\label{eq:G}
G(Y_1,Y_2)=1-2 \int_0^1 C(x,x) \, dx, \quad Y_1, Y_2\in L_{F_1},
\end{equation*}
where $C$ is the copula of  $(Y_1, Y_2)$, is called a Gini metric on $L_{F_1}$. 
\end{definition}
By definition, the Gini metric is  distribution--free: it takes only the dependence structure between $Y_1$ and $Y_2$ into account, but not the marginal distribution of $Y_1,Y_2$.

\begin{remark}
In case $Y_1\stackrel{d}{=}Y_2$, the maximum value \eqref{eq:maxEm} equals the $L^\infty([0,1])$--distance between the diagonals of $C$ and of the upper Fr\'{e}chet--Hoeffding bound $M_2(x,y)=\min\{x,y\},$ $x,y\in[0,1]$, cf. e.g. \cite[Theorem 1.7.3]{DurSem16}.  Namely, 
\begin{equation}\label{eq:DiagLinf}
\sup_{m\in D}  E_{m}(Y_1,Y_2)=2\| M_2(x,x) -C(x,x)    \|_{L^\infty([0,1])}.\end{equation}
Similarly, relation \eqref{eq:E_mExp} yields $G(Y_1,Y_2)=2\| M_2(x,x)-C(x,x)\|_{L^1([0,1])}$. In other words, the Gini metric $G$ measures the $L^1$--deviation of the diagonal of the copula $C$ of  $(Y_1, Y_2)$ to the diagonal of the comonotonicity copula $M_2$.
\end{remark}

\begin{lemma}\label{lemm:BoundsG}
It holds $0\le G(Y_1,Y_2) \le 1/2$,  $Y_1, Y_2\in L_{F_1}$. The upper bound is attained whenever $Y_2=f(Y_1)$ a.s. for some decreasing function $f$ such that
$F_1(x)=1-F_1(f^{-1}(x))$, $x\in \R$.
\end{lemma}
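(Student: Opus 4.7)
The natural approach is to squeeze $\int_0^1 C(x,x)\,dx$ between two integrals coming from the Fréchet--Hoeffding bounds. Recall that for any copula $C$ one has $W_2(u,v):=\max(u+v-1,0)\le C(u,v)\le M_2(u,v)=\min(u,v)$, so on the diagonal
$$ \max(2x-1,0)\le C(x,x)\le x, \quad x\in[0,1]. $$
The upper inequality integrates to $\int_0^1 C(x,x)\,dx\le 1/2$, which by the defining formula $G(Y_1,Y_2)=1-2\int_0^1 C(x,x)\,dx$ yields $G(Y_1,Y_2)\ge 0$; the lower inequality integrates (after the elementary computation $\int_{1/2}^1(2x-1)\,dx=1/4$) to $\int_0^1 C(x,x)\,dx\ge 1/4$, hence $G(Y_1,Y_2)\le 1/2$. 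This takes care of both bounds.

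For the attainment statement, the plan is to show that under the hypothesis $Y_2=f(Y_1)$ a.s.\ with $f$ decreasing and $F_1(x)=1-F_1(f^{-1}(x))$, the copula of $(Y_1,Y_2)$ is exactly $W_2$, so that the diagonal $C(x,x)=\max(2x-1,0)$ and the inequality above becomes an equality, giving $G(Y_1,Y_2)=1/2$. First I would check that the distributional constraint is meaningful for $Y_1,Y_2\in L_{F_1}$: since $\pr(Y_2\le x)=\pr(f(Y_1)\le x)=\pr(Y_1\ge f^{-1}(x))=1-F_1(f^{-1}(x))$, the stated identity is equivalent to $Y_2$ having the same absolutely continuous c.d.f.\ $F_1$ as $Y_1$, so both random variables indeed sit in $L_{F_1}$.

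Next I would compute the joint distribution. Using the monotone decrease of $f$,
$$ \pr(Y_1\le x_1,Y_2\le x_2)=\pr(Y_1\le x_1,\,Y_1\ge f^{-1}(x_2))=\bigl(F_1(x_1)-F_1(f^{-1}(x_2)-)\bigr)_+, $$
and upon invoking $F_1(f^{-1}(x_2))=1-F_1(x_2)$ together with the continuity of $F_1$, the right-hand side simplifies to $\max\bigl(F_1(x_1)+F_1(x_2)-1,0\bigr)$. By Sklar's theorem this identifies the copula as $C=W_2$, so $C(x,x)=\max(2x-1,0)$ and $G(Y_1,Y_2)=1-2\int_0^1\max(2x-1,0)\,dx=1/2$, closing the lemma.

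The only point requiring a bit of care is the simplification of the joint distribution: one must keep track that $f^{-1}$ is itself decreasing and that absolute continuity of $F_1$ removes the ambiguity between $F_1(f^{-1}(x_2)-)$ and $F_1(f^{-1}(x_2))$. All remaining computations are routine applications of the Fréchet--Hoeffding bounds and elementary integration.
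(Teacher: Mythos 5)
Your proposal is correct. The first half (the bounds $0\le G\le 1/2$ via the Fr\'echet--Hoeffding inequalities $W_2\le C\le M_2$ integrated along the diagonal) is exactly the paper's argument. For the attainment of the upper bound you take a genuinely different, more self-contained route: you compute the joint distribution function directly, $\pr(Y_1\le x_1, Y_2\le x_2)=\bigl(F_1(x_1)-F_1(f^{-1}(x_2))\bigr)_+=W_2(F_1(x_1),F_1(x_2))$ using the hypothesis $F_1(x)=1-F_1(f^{-1}(x))$ and absolute continuity of $F_1$, and then identify $C=W_2$ by Sklar's theorem, so $C(x,x)=\max(2x-1,0)$ and $G=1/2$. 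The paper instead invokes the countermonotonicity characterization \cite[Theorem 2.5.13 (d)]{DurSem16} (the copula equals $W_2$ exactly when $Y_2$ is a.s.\ a decreasing function of $Y_1$) and then derives the identity $F_1(x)=1-F_1(f^{-1}(x))$ as a consequence of $Y_1\eqd Y_2$, whereas you verify that this identity is precisely what makes $Y_2$ have law $F_1$ and use it as a hypothesis. Your version buys an elementary, citation-free proof of sufficiency (the direction actually asserted in the lemma); the paper's citation is shorter and, being an equivalence, also signals that $C=W_2$ is necessary for attaining the bound. Your caveats about $f^{-1}$ being decreasing and continuity of $F_1$ resolving $F_1(f^{-1}(x_2)-)$ versus $F_1(f^{-1}(x_2))$ are exactly the right points to flag, and they hold in $L_{F_1}$.
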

\begin{proof}
Using the Fr\'{e}chet--Hoeffding bounds \cite[Theorem 1.7.3]{DurSem16}
$$
W_2(x,y)\le C(x,y)\le M_2(x,y), \quad x,y\in [0,1]
$$ 
with $W_2(x,y)=\max\{ 0, x+y-1 \}$  being a copula of linearly dependent random variables, one can easily calculate
$$
1/4\le \int_0^1 C(x,x) \, dx \le 1/2.
$$
 Thus, relation  \eqref{eq:E_mExp}  yields the  bounds $0\le G(Y_1,Y_2) \le 1/2$, whereas the upper bound is attained by  \cite[Theorem 2.5.13 (d)]{DurSem16} whenever there exists a decreasing function $f$ such that $Y_2=f(Y_1)$ a.s. Since we assume $Y_1\stackrel{d}{=}Y_2$ here, it can be only the case if 
 $$F_1(x)=\pr(Y_1\le x)= \pr(f(Y_2)\le x)=\pr(Y_2\ge f^{-1}(x))=1- \pr(Y_2< f^{-1}(x))=1-F_1(f^{-1}(x))$$
 for any real $x$. In the latter relation, we used the absolute continuity of the distribution of $Y_2$.
\end{proof}
\begin{example}
The upper bound $1/2$ is attained by Gini metric in Lemma \ref{lemm:BoundsG} if  $Y_1+Y_2=\mu$ a.s. for some $\mu\in\R$  and  the distribution of $Y_1$ is symmetric about $\mu$:
$F_1(x)=1-F(\mu-x)$, $x\in \R$. To see this, just take $f(x)=\mu-x$ in  Lemma \ref{lemm:BoundsG}.
\end{example}
\begin{example}\label{ex:GiniIndependent}
The Gini distance between stochastically independent  random variables $Y_1, Y_2\in L_{F_1}$  is equal to $1/3$, since in this case $C(x,y)=xy$, $x,y\in[0,1],$ and thus 
$G(Y_1,Y_2) = 1-2\int_0^1 x^2 \,dx=  1/3$.
\end{example}

To summarize, the choice $m=\pr_{Y_1}$ for an absolutely continuous law $\pr_{Y_1}$ seems  natural to make $E_m$ a metric. Gini metric $G$ is easy to calculate and distribution--free (depending only on the copula $C$ of $(Y_1,Y_2)$, cf. relation \eqref{eq:E_mExp})  when $\pr_{Y_1}=\pr_{Y_2}$. Informally speaking, the excursion levels are chosen here according to the same law as  $\pr_{Y_1}=\pr_{Y_2}$ which makes the corresponding excursion sets non--empty (with positive probability) and representative for $Y_1, Y_2$.

\section{Prediction of random variables}\label{sectPredict}

In this section, we discuss the prediction of a value of random variable $X$ with continuous distribution function $F_X$ based on the set $\mathcal{X}_n:=(X_1,\ldots,X_n)$ of realizations of $X$  via the excursion metric metric $E_{F_X}$. Namely, we propose a predictor $\widehat{X}_\lambda:=g(\lambda,\mathcal{X}_n),$ where $\lambda=(\lambda_1,\ldots, \lambda_d) \in \Lambda\subset\R^d$ is deterministic  and $g:\R^n\times \Lambda\to \R$, $d,n\in\N$, is a continuous measurable function such that the excursion metric is minimal:

\begin{equation}\label{eq:E_PredictFormal}
E_{F_X}(X, \widehat{X}_\lambda) \to \min_{\lambda\in \Lambda}.
\end{equation}

Here, the set of admissible parameters $\Lambda$ as well as the analytic form of $g(\lambda,\mathcal{X}_n)$ depend on  the law $\pr_X$. For instance, the choice $g(\lambda,\mathcal{X}_n)=\sum_{j=1}^n \lambda_j X_j$ makes sense for infinitely divisible laws of $X$, whereas  $g(\lambda,\mathcal{X}_n)=\max_{j=1,\ldots,n} \lambda_j X_j$ might be a better choice for max--stable $X$. In both cases, we assume $d=n$. The set $\Lambda$ may incorporate additional constraints onto $\pr_{\widehat X}$, for instance, $\widehat{X}_\lambda \eqd X$. 
Since $g$ and $F_X$ are continuous, the constraint $\widehat{X}_\lambda\stackrel{d}{=}X$ is equivalent to $F_X(\widehat{X}_\lambda)\stackrel{d}{=}F_X(X)\stackrel{d}{=}U,$ where $U\sim U(0,1).$ Under additional assumptions onto the joint probability law of  $(X, \mathcal{X}_n)$ and onto $g$, the set  $\Lambda_g:=\{\lambda\in \R^d :F_X(g(\lambda,\mathcal{X}_n))\stackrel{d}{=}U\}$ is a manifold in $\R^d$. Unfortunately, the analytic form of $\Lambda_g$ can be found only in specific cases when the pre-knowledge of the distribution of $(X,\mathcal{X}_n)$ is available, such as in the Gaussian or $\alpha$--stable case, cf. \cite{Dasetal22}.

Should our prediction be law-preserving (i.e.,  $\widehat{X}_\lambda \eqd X$), it holds  $E_{F_X}=G$, and   the optimization problem 
$G(X, \widehat{X}_\lambda) \to \min_{\lambda\in \Lambda}$
with $\Lambda=\Lambda_g$ rewrites using \eqref{eq:E_mExp} as
\begin{equation}\label{eq:E_PredictFormal_Copula}
\int_0^1 C _{X, \widehat{X}_\lambda}(x,x) \,dx  \to \max_{\lambda\in \Lambda_g},
\end{equation}
where $C _{X, \widehat{X}_\lambda}$ is the copula of $(X, \widehat{X}_\lambda)$. In order to avoid a tricky statistic assessment of copulas, we use however the following  forms of prediction which are motivated by  \eqref{EF1:eq1}   and  require only expectations to be inferred into: 





\begin{definition}
\label{def:excurs:pred}
The excursion predictor $\widehat{X}_\lambda$ is given by  $\widehat{X}_\lambda=g(\hat{\lambda},\mathcal{X}_n)$,  where 
\begin{equation}
    \label{unconst:eq1}
    \hat{\lambda}:=\arg \min_{\lambda\in \Lambda}\left[2\E F_X (X\vee \widehat{X}_\lambda)-\E F_X (\widehat{X}_\lambda)\right]
\end{equation}
in general case, or
 \begin{equation}
    \label{const:eq2}  
    \hat{\lambda}:=\arg \min_{\lambda\in \Lambda_g}\left\{\E F_X (X\vee \widehat{X}_\lambda)\right\}
\end{equation}
in case of law-preserving prediction $\widehat{X}_\lambda\stackrel{d}{=}X$.
\end{definition}

Let us consider the law-preserving case in more detail. If the analytic form of $ \Lambda$ is given explicitly but $ \Lambda_g$ is hardly available, we modify the minimization functional in \eqref{const:eq2} by adding a term which penalizes a  difference between the law of $Y_1=F_X(\widehat{X}_\lambda)$ and $Y_2\sim U(0,1)$:

\begin{equation}
    \label{const:eq3} 
    \hat{\lambda}:=\arg \min_{\lambda\in \Lambda}\left\{2 \E F_X (X\vee \widehat{X}_\lambda)    - \E F_X (\widehat{X}_\lambda) +\gamma \rho^2(F_{Y_1},F_{Y_2})\right\},
\end{equation}
where $\gamma>0$ is a penalty weight and $\rho$ is an arbitrary (but handy) metric  on the space of continuous distribution functions of random variables.
 For simplicity reasons, we use the {\it 2-Wasserstein distance }
    $$\rho(F_1,F_2)=\left(\int_0^1 [F^{-1}_1(x)-F^{-1}_2(x) ]^2dx\right)^{1/2}$$
   between two c.d.f.'s $F_1$ and $F_2$ with quantile functions  $F^{-1}_1$ and  $F^{-1}_2$, respectively.
    
    In the case $Y_2\sim U(0,1)$ we have  $F_{Y_2}^{-1}(x)=x$,  $x\in[0,1]$. Hence, the squared 2-Wasserstein distance equals
    \begin{equation}
    \label{eq:Wasserstn} 
    \rho^2(F_{Y_1},F_{Y_2})=\int_0^1 x^2 dx +\int_0^1 y^2dF_{Y_1}(y)-\int_{0}^1 y d F_{Y_1}^2(y)=\frac{1}{3}+\E Y_1^2-\E [Y_1\vee Y],
    \end{equation}
     where $Y$ is an independent copy of $Y_1.$ The latter relation holds since $\pr(Y_1\vee Y\le y)=  F_{Y_1}^2(y)$, $y\in \R$.
Due to $Y_1=F_X(\widehat{X}_\lambda)\in [0,1]$ a.s., it holds  $\E [Y_1\vee Y]\ge \E Y_1^2$. Thus, the minimization problem \eqref{const:eq3}  rewrites in an equivalent form:
\begin{equation}
    \label{eq:FinalPreidct} 
    \hat{\lambda}:=\arg \min_{\lambda\in \Lambda}\left\{2 \E F_X (X\vee \widehat{X}_\lambda)    - \E F_X (\widehat{X}_\lambda)+\gamma \left[  \E F_X^2(\widehat{X}_\lambda)- \E [F_X(\widehat{X}_\lambda)\vee Y]   \right]       \right\},
\end{equation}    
where $Y$ is an independent copy of $F_X(\widehat{X}_\lambda)$.
    Compared with formulation \eqref{const:eq2}, the new prediction method   \eqref{eq:FinalPreidct} does not require an explicit knowledge of $\Lambda_g$, but it realizes the constraint 
    $\widehat{X}_\lambda\stackrel{d}{=}X$ only in approximation form: $\rho(F_X, F_{\widehat{X}_\lambda})\le \varepsilon$ for some small $\varepsilon>0$.
    
    Sometimes it is advantageous to use the integration by parts in  \eqref{eq:Wasserstn} and write
   \begin{equation}
    \label{eq:Wasser1} 
    \rho^2(F_{Y_1},F_{Y_2})=\frac{1}{3}+ \int_0^1 F_{Y_1}(y) \left[ F_{Y_1}(y)-2y\right]\, dy
  \end{equation}
  which allows for an equivalent reformulation
   \begin{equation}
   \label{eq:FinalPreidct1} 
    \hat{\lambda}:=\arg \min_{\lambda\in \Lambda}\left\{2 \E F_X (X\vee \widehat{X}_\lambda)    - \E F_X (\widehat{X}_\lambda) +\gamma  \int_0^1 F_{Y_1}(y) \left[ F_{Y_1}(y)-2y\right]\, dy     \right\}
\end{equation}    
of the problem     \eqref{eq:FinalPreidct}.




To summarize, prediction approach \eqref{unconst:eq1} will be used for unconstrained prediction of a random variable $X$ with an absolutely continuous c.d.f. $F_X$ based on its realizations $\mathcal{X}_n=(X_1,\ldots,X_n)$. For the law--preserving prediction, approaches  \eqref{const:eq2}, \eqref{eq:FinalPreidct} or \eqref{eq:FinalPreidct1}  will be used depending on whether the restrained parameter set $\Lambda_g$ is given explicitly or not.

\section{Existence of a solution}\label{sect:Exist}

A solution to the above optimization problems exists on compact parametric sets due to the continuity of the corresponding target functionals: 

\begin{theorem}\label{thm:ExistSol}
Let the joint distribution of the random vector $(X,\mathcal{X}_n)$ be absolutely continuous with respect to the Lebesgue measure on $\R^{n+1}$. Introduce the following assumptions:
\begin{enumerate}
\item[\rm (i)] $\Lambda$ (or $\Lambda_g$, respectively) is a compact in $\R^n$.
\item[\rm (ii)] The copula diagonal $C _{X, \widehat{X}_\lambda}(x,x)$   of $(X, \widehat{X}_\lambda)$ is continuous on $\Lambda$ (or $\Lambda_g$, respectively)  uniformly w.r.t. $x\in[0,1]$.
\item[\rm (iii)] For each $\lambda\in\Lambda$, $\widehat{X}_\lambda$ has an absolutely continuous distribution with density   $p_{\widehat{X}_\lambda}$   such that the map $p_{\widehat{X}_\lambda}:\Lambda\to L^1(\R)$ is continuous  on $\Lambda$ w.r.t. the $L^1$--norm. 
\end{enumerate}
If the conditions {\rm (i)-(ii)} hold then there exists a solution to the problem \eqref{const:eq2}. If the conditions {\rm (i)-(iii)} hold then there exists a solution to the problems \eqref{unconst:eq1} and \eqref{eq:FinalPreidct1}. 
\end{theorem}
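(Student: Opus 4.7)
My plan is to apply the Weierstrass extreme value theorem: on a compact parameter set a continuous target functional attains its minimum. By (i) the parameter sets $\Lambda$ and $\Lambda_g$ are compact, so everything reduces to continuity in $\lambda$ of the target. Each target is built from the three building blocks
\[
A(\lambda):= \E F_X(X\vee \widehat{X}_\lambda),\quad B(\lambda):= \E F_X(\widehat{X}_\lambda),\quad P(\lambda):= \int_0^1 F_{Y_1^\lambda}(y)\bigl[F_{Y_1^\lambda}(y)-2y\bigr]\, dy,
\]
with $Y_1^\lambda = F_X(\widehat{X}_\lambda)$. The target of \eqref{const:eq2} is $A$, the target of \eqref{unconst:eq1} is $J_{\mathrm{u}}(\lambda):=2A(\lambda)-B(\lambda)$, and the target of \eqref{eq:FinalPreidct1} is $J_{\mathrm{p}}(\lambda):=J_{\mathrm{u}}(\lambda)+\gamma P(\lambda)$, so it suffices to establish continuity of $A$, $B$ and $P$ separately.

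For $A(\lambda)$ I would reuse the copula diagonal representation behind \eqref{eq:E_mExp}. Since $F_X$ is continuous and nondecreasing, $F_X(X\vee\widehat{X}_\lambda)=F_X(X)\vee F_X(\widehat{X}_\lambda)$; in the law-preserving regime $\widehat{X}_\lambda \eqd X$ both marginals of $(F_X(X), F_X(\widehat{X}_\lambda))$ are $U(0,1)$, and Sklar's theorem collapses the tail formula to the clean identity $A(\lambda) = 1 - \int_0^1 C_{X,\widehat{X}_\lambda}(x,x)\, dx$. Assumption (ii) then yields continuity of $A$ on $\Lambda_g$ directly by dominated convergence, which already settles \eqref{const:eq2} under (i)-(ii). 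For the unconstrained problem the same copula formula is available on all of $\Lambda$ under (ii); if one prefers to bypass the copula, the standing joint absolute-continuity hypothesis on $(X,\mathcal{X}_n)$ together with continuity of $g$ in $\lambda$ let one write $A(\lambda)= \int_{\R^{n+1}} F_X\bigl(x\vee g(\lambda,z)\bigr)\, p_{X,\mathcal{X}_n}(x,z)\, dx\, dz$ and dominate by $1$ to apply dominated convergence.

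For $B(\lambda)$ and $P(\lambda)$ I would invoke (iii). Because $0\le F_X\le 1$, the Lipschitz-type estimate
\[
|B(\lambda)-B(\lambda_0)| = \Bigl|\int_\R F_X(y)\bigl(p_{\widehat{X}_\lambda}(y)-p_{\widehat{X}_{\lambda_0}}(y)\bigr)\, dy\Bigr| \le \bigl\|p_{\widehat{X}_\lambda}-p_{\widehat{X}_{\lambda_0}}\bigr\|_{L^1(\R)}
\]
gives continuity of $B$; the same estimate applied with $\ind\{F_X(\cdot)\le y\}$ in place of $F_X$ gives pointwise continuity of $\lambda \mapsto F_{Y_1^\lambda}(y)$ for every $y\in[0,1]$. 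Combined with the uniform bound $|F_{Y_1^\lambda}(y)(F_{Y_1^\lambda}(y)-2y)|\le 3$, dominated convergence then yields continuity of $P$. Thus $J_{\mathrm{u}}$ and $J_{\mathrm{p}}$ are continuous on the compact $\Lambda$ and Weierstrass delivers minimizers for \eqref{unconst:eq1} and \eqref{eq:FinalPreidct1}.

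The step I expect to be the main obstacle is the continuity of the joint ingredient $A(\lambda)$: assumption (iii) controls only the marginal density of $\widehat{X}_\lambda$ and carries no direct information about the joint dependence with $X$, so (ii) is what one genuinely needs to close the copula argument (or, alternatively, the standing joint-density hypothesis together with continuity of $g$). The remaining pieces are routine $L^1$-density manipulations plus dominated convergence.
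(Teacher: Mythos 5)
Your proposal is correct and follows essentially the same route as the paper: compactness from (i) plus continuity of the target functionals, using the copula diagonal under (ii) for the joint term $\E F_X(X\vee \widehat{X}_\lambda)$ and the $L^1$-continuity of $\lambda\mapsto p_{\widehat{X}_\lambda}$ under (iii) for $\E F_X(\widehat{X}_\lambda)$ and the penalty term, concluded by dominated convergence and Weierstrass. Your direct estimate $|B(\lambda)-B(\lambda_0)|\le \|p_{\widehat{X}_\lambda}-p_{\widehat{X}_{\lambda_0}}\|_{L^1}$ and the joint-density alternative for $A(\lambda)$ are only minor streamlinings of the paper's argument (which passes through uniform convergence of $F_{\widehat{X}_\lambda}\circ F_X^{-1}$), not a different method.
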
 
\begin{proof}
To show the existence of a solution, it is sufficient to assume (i) and show that the target functional $\Phi(\lambda)$ to be maximized or minimized is continuous on $\Lambda_g$ or $\Lambda$.

In case of the problem \eqref{const:eq2}, we have 
$$
\Phi(\lambda)=\int_0^1 C _{X, \widehat{X}_\lambda}(x,x) \,dx, \quad \lambda\in\Lambda_g
$$
with regard to \eqref{eq:E_PredictFormal_Copula}. Condition (ii) ensures the continuity of $\Phi$ on $\Lambda_g$ in view of the corresponding theorem for the continuity of integrals with parameters.

For the problem \eqref{unconst:eq1}, we have 
$$
\Phi(\lambda)=2-2\int_0^1 C _{X, \widehat{X}_\lambda}(x,x) \,dx  -\E F_X (\widehat{X}_\lambda), \quad \lambda\in\Lambda 
$$
together with
$$
\E F_X (\widehat{X}_\lambda)=\int_0^1 \pr( F_X (\widehat{X}_\lambda)>y) dy=1- \int_0^1 F_{\widehat{X}_\lambda} ( F_X^{-1}(y)) dy,
$$
where $ F_{\widehat{X}_\lambda}$ is the c.d.f. of the predictor $\widehat{X}_\lambda$.
The latter relation holds since the c.d.f. $F_X$ is strictly increasing on  $S=\mbox{supp} (X)$. Due to the absolute continuity of the distribution of 
$\widehat{X}_\lambda$, the function $F_{\widehat{X}_\lambda} ( F_X^{-1}(y))$ is continuous on $[0,1]$ for each $\lambda\in\Lambda$.
Moreover, for any sequence $\{ \lambda_k\} \subset \Lambda$ with $\lambda_k\to\lambda_0\in\Lambda$ as $k\to\infty$  we have
$$
\sup_{x\in S} \left| F_{\widehat{X}_{\lambda_k}}(x)-  F_{\widehat{X}_{\lambda_0}}(x) \right|=\sup_{x\in \R} \left|  \int_{-\infty}^x \left[ p_{\widehat{X}_{\lambda_k}}(y)-  p_{\widehat{X}_{\lambda_0}}(y)   \right] dy \right|\le \int_\R  \left| p_{\widehat{X}_{\lambda_k}}(y)-  p_{\widehat{X}_{\lambda_0}}(y) \right| dy\to 0
$$
as $k\to \infty$ by assumption (iii) which means the continuity of $ F_{\widehat{X}_\lambda} ( F_X^{-1}(y))$ on $ \Lambda$ uniformly w.r.t. $y\in[0,1]$.
The application of the theorem on the continuity of integrals with parameters finishes the proof.

In the problem \eqref{eq:FinalPreidct1},
the target functional rewrites 
$$
\Phi(\lambda)=2-2\int_0^1 C _{X, \widehat{X}_\lambda}(x,x) \,dx  -\E F_X (\widehat{X}_\lambda)+\gamma  \int_0^1 F_{Y_1}(y) \left[ 2y-F_{Y_1}(y)\right]\, dy    , \quad \lambda\in\Lambda. 
$$
Similarly to the previous case, it is not difficult to show that the integrand  $ F_{Y_1}(y) \left[ 2y-F_{Y_1}(y)\right]$  is  continuous on $\Lambda$ uniformly w.r.t. $y\in[0,1]$ provided that condition (iii) holds true.
\end{proof}

\begin{remark}
Condition (iii) of Theorem \ref{thm:ExistSol} means that for any sequence $\{ \lambda_k\} \subset \Lambda$ with $\lambda_k\to\lambda_0\in\Lambda$ as $k\to\infty$  
$$d_{TV}(\widehat{X}_{\lambda_k},\widehat{X}_{\lambda_0})=\frac{1}{2}  \int_\R  \left| p_{\widehat{X}_{\lambda_k}}(y)-  p_{\widehat{X}_{\lambda_0}}(y) \right| dy
=\frac{1}{2}   \left\|  p_{\widehat{X}_{\lambda_k}}-  p_{\widehat{X}_{\lambda_0}} \right\|_1\to 0, \quad k\to\infty,
$$
where   $d_{TV}$ is the total variation distance and $\|  \cdot \|_1$ is the norm in $L^1(\R)$. It implies that  $\widehat{X}_{\lambda_k} \to \widehat{X}_{\lambda_0}$ in total variation as $k\to\infty$. 
\end{remark}
Let us give some examples of the compacts $\Lambda_g\subset \R^n$. In what follows, the random vector $(X,\mathcal{X}_n)$ will have a joint $\alpha$--stable distribution for some 
 $\alpha\in(0,2]$, hence it is natural to consider the linear predictor $\widehat{X}_\lambda=\sum_{j=1}^n \lambda_j X_j$. 
\begin{example}\label{ex:Lambda}

\begin{enumerate}
\item If $(X,\mathcal{X}_n)$ is a Gaussian random vector with marginal distribution $N(\mu, \sigma^2)$ and $\Sigma$ is the covariance matrix of   $\mathcal{X}_n$
then the manifold $\Lambda_g$ is an ellipsoid of dimension $n-1$ given by 
$$
\Lambda_g=\left\{  \lambda=(\lambda_1,\ldots, \lambda_k)\in \R^n:  \lambda^\top \Sigma \lambda=\sigma^2, \quad \sum_{j=1}^n \lambda_j=1 \right\}.
$$
\item If $(X,\mathcal{X}_n)$ is a subgaussian random vector with  stability index $\alpha\in(0,2)$  and i.i.d. standard Gaussian components then it follows from \cite[p. 80-81]{SamTaq94} that 
 $\Lambda_g$ is a unit sphere $S^{n-1}$  in $\R^n$ given by 
$$
\Lambda_g=\left\{  \lambda=(\lambda_1,\ldots, \lambda_k)\in \R^n:   \sum_{j=1}^n \lambda_j^2=1 \right\}.
$$
\item If $(X,\mathcal{X}_n)$ is a symmetric $\alpha$--stable random vector with  stability index $\alpha\in(0,2)$, scale parameter $\sigma=1$ of the marginal distributions   and spectral measure $\Gamma$ of   $\mathcal{X}_n$ then it follows from \cite[p. 73]{SamTaq94} that 
$$
\Lambda_g=\left\{  \lambda=(\lambda_1,\ldots, \lambda_n)\in \R^n:   \int_{S^{n-1}} |\langle  \lambda, s\rangle|^\alpha \, \Gamma(ds)=1 \right\}.
$$
The dominated convergence theorem helps showing that $\Lambda_g$ is a  closed set. 
For $\alpha\in[1,2)$, rewrite the constraint in $\Lambda_g$ as
$$
| \lambda | =  h_K^{-1} (u), \quad u=\lambda/|\lambda|\in S^{n-1}  , 
$$
where  $|\cdot|$ is the Euclidean norm in $\R^n$ and
$$
h_K (u)= \left( \int_{S^{n-1}} |\langle  \lambda, s\rangle|^\alpha \, \Gamma(ds)\right)^{1/\alpha}
$$
is the support function of a convex set $K= \Gamma^{1/\alpha} (S^{n-1}) \E _\alpha [-\eta, \eta]  $ named $L_\alpha$-{\it zonoid}. 
Here $\eta$ is a random vector on $S^{n-1}$ distributed according to $\Gamma (\cdot)/ \Gamma (S^{n-1})$ and $\E_\alpha  [-\eta, \eta]$ is the Firey $\alpha$--expectation of the random segment $ [-\eta, \eta]$, cf. e.g. \cite{Molch09}.
If $K$ is full-dimensional  (which is e.g. the case if $\Gamma$ has a density w.r.t. to the surface area measure on $S^{n-1}$ which is bounded away from zero everywhere on $S^{n-1}$) we have $\inf_{u\in S^{n-1}} h_K(u)>0$ and thus $\Lambda_g$ is bounded, hence a compact.
\end{enumerate}
\end{example}
\begin{example}
Show that  conditions (i)-(iii) of Theorem  \ref{thm:ExistSol}  are satisfied if $(X,\mathcal{X}_n)$ is a Gaussian random vector with marginal distribution $N(0,1)$. For any $\lambda\in \Lambda_g$, it holds $\widehat{X}_\lambda\sim N(0,1)$.  Condition (i) was shown in Example \ref{ex:Lambda}. Let us check condition (ii).
In view of \cite{Dasetal22}, the copula diagonal writes
$$
C _{X, \widehat{X}_\lambda}(x,x)=x^2+ \frac{1}{2\pi} \int_{0}^{\sin^{-1}\left(\rho_\lambda \right)} \exp\left(- ( \varphi^{-1}(x))^{2}\frac{1-\sin\left(\theta\right)}{\cos^{2}\left(\theta\right)}\right)d\theta,
$$
where $  \varphi^{-1}(x)$ is the quantile function of $N(0,1)$ and  $\rho_{\lambda} = \mathrm{Corr}\left(X,\widehat{X}_\lambda \right)=\sum_{j=1}^n \lambda_j \mathrm{Cov}\left(X,{X}_j\right)$. 
Since the exponential function under the integral is nonnegative and bounded from above by one, we get 
$$
\left|  C _{X, \widehat{X}_{\lambda_1}}(x,x)-C _{X, \widehat{X}_{\lambda_2}}(x,x)  \right|\le   \frac{1}{2\pi} \left|   \sin^{-1}\left(\rho_{\lambda_1}\right) - \sin^{-1}\left(\rho_{\lambda_2}\right)   \right|, \quad \lambda_!, \lambda_2 \in \Lambda_g
$$
uniformly on $x\in [0,1]$ which shows the uniform continuity of $C _{X, \widehat{X}_\lambda}(x,x)$ on $\Lambda_g$.
To show (iii), let the covariance matrix $\Sigma$ of  $\mathcal{X}_n$ be positive definite. Then $\widehat{X}_{\lambda}\sim N(0, \lambda^\top \Sigma \lambda)$ where $\lambda^\top \Sigma \lambda>0$ for all $\lambda\neq 0$, hence it has a Gaussian density which is continuous on $\R^n\setminus\{0\}$ in the $L^1$--norm. To see this, just use the multivariate mean value theorem for this density with respect to $\lambda$.

\end{example}

The next result shows that it is sufficient to consider bounded spaces $\Lambda$ or $\Lambda_g$ in minimization problems \eqref{unconst:eq1}, \eqref{const:eq2}, and \eqref{eq:FinalPreidct}. If these spaces are additionally closed, the existence of a solution is guaranteed by Theorem  \ref{thm:ExistSol}. For instance, so is  often the choice
$$
\Lambda=\{  \lambda\in \R^n: |\lambda|\le M \} \mbox{ or }  \Lambda=\{  \lambda\in \R_+^n: |\lambda|\le M \}
$$
for a suitable $M>0$. 
Introduce the notation 
$$
\Phi_1(\lambda):=\E F_X (X\vee \widehat{X}_\lambda)  , 
$$
$$
\Phi_2(\lambda):=2\E F_X (X\vee \widehat{X}_\lambda)-\E F_X (\widehat{X}_\lambda)   , 
$$

$$
\Phi_3(\lambda):=\Phi_2(\lambda) +\gamma \left[ \E F_X^2(\widehat{X}_\lambda)- \E [F_X(\widehat{X}_\lambda)\vee Y] \right]    
$$
for the target functionals in minimization problems \eqref{const:eq2}, \eqref{unconst:eq1}, \eqref{eq:FinalPreidct} or \eqref{eq:FinalPreidct1}, respectively.

\begin{proposition} \label{thm:Bounded}
Assume that there exists $\lambda _0 \in \Lambda_g$ or $\Lambda$ such that $ \Phi_j(\lambda_0)<1$, $j=1,2,3$.
Let $\widehat{X}_{\lambda_k} \toP + \infty$ as $k\to\infty$ for any sequence $\{\lambda_k\}\subset \Lambda$  such that $|\lambda_k| \to + \infty$. Then there exists $M>0$ such that
 $$
  \min_{\lambda\in\Lambda_g  }\Phi_1(\lambda)= \min_{\lambda\in\Lambda_g: |\lambda|\le M }\Phi_1(\lambda), \quad 
 \min_{\lambda\in\Lambda  }\Phi_j(\lambda)= \min_{\lambda\in\Lambda: |\lambda|\le M }\Phi_j(\lambda), \quad j=2,3.
 $$
\end{proposition}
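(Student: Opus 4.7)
The plan is to show that along every sequence $\{\lambda_k\}$ with $|\lambda_k|\to\infty$ in the relevant feasible set, each target functional $\Phi_j$ tends to $1$. Combined with the assumption that $\Phi_j(\lambda_0)<1$ for some $\lambda_0$, this rules out minimizing sequences escaping to infinity and lets us truncate $\Lambda$ (or $\Lambda_g$) to a closed ball without altering the minimum.

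First I would observe that $\widehat{X}_{\lambda_k}\toP +\infty$ combined with the continuity of $F_X$ and $F_X(+\infty)=1$ yields $F_X(\widehat{X}_{\lambda_k})\toP 1$. Since $0\le F_X\le 1$, bounded convergence gives $\E F_X(\widehat{X}_{\lambda_k})\to 1$, and the pointwise squeeze $F_X(\widehat{X}_{\lambda_k})\le F_X(X\vee\widehat{X}_{\lambda_k})\le 1$ yields $\E F_X(X\vee\widehat{X}_{\lambda_k})\to 1$ as well. This immediately produces $\Phi_1(\lambda_k)\to 1$ and $\Phi_2(\lambda_k)\to 2\cdot 1-1=1$. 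For $\Phi_3$ the additional summand is $\gamma\bigl[\E F_X^2(\widehat{X}_{\lambda_k})-\E(F_X(\widehat{X}_{\lambda_k})\vee Y^{(k)})\bigr]$, where $Y^{(k)}$ is an independent copy of $F_X(\widehat{X}_{\lambda_k})$. The first term converges to $1$ by the same bounded-convergence argument applied to the square. For the second, $Y^{(k)}$ shares the distribution of $F_X(\widehat{X}_{\lambda_k})$ and thus converges in distribution to the constant $1$, hence $Y^{(k)}\toP 1$; sandwiched between $F_X(\widehat{X}_{\lambda_k})$ and $1$, the join $F_X(\widehat{X}_{\lambda_k})\vee Y^{(k)}$ also tends to $1$ in probability, so $\E[F_X(\widehat{X}_{\lambda_k})\vee Y^{(k)}]\to 1$ by bounded convergence. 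The penalty bracket therefore vanishes and $\Phi_3(\lambda_k)\to 1$.

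With $\Phi_j(\lambda)\to 1$ as $|\lambda|\to\infty$ established, set $\delta=(1-\Phi_j(\lambda_0))/2>0$ and choose $M>|\lambda_0|$ so that $\Phi_j(\lambda)\ge 1-\delta>\Phi_j(\lambda_0)$ whenever $\lambda$ lies in the feasible set with $|\lambda|>M$. Since $\lambda_0$ itself is contained in $\{|\lambda|\le M\}$, the minimum over the whole feasible set coincides with that over its intersection with the closed ball of radius $M$, which is the claimed equality. For the $\Lambda_g$ case one can alternatively invoke the law-preserving constraint $\widehat{X}_\lambda\eqd X$: this makes $\{\widehat{X}_\lambda\}_{\lambda\in\Lambda_g}$ tight, which is incompatible with $\widehat{X}_{\lambda_k}\toP +\infty$, so $\Lambda_g$ must already be bounded and any large $M$ works.

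The main technical point is the handling of the independent copy in $\Phi_3$: one must justify that convergence in probability of $F_X(\widehat{X}_{\lambda_k})$ to the constant $1$ transfers to an independent copy $Y^{(k)}$ whose law itself varies with $k$. This rests on the standard fact that weak convergence to a constant is the same as convergence in probability, after which the boundedness of every quantity in $[0,1]$ reduces all remaining expectation manipulations to routine bounded convergence.
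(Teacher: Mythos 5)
Your proposal is correct and follows essentially the same route as the paper: show that $\Phi_j(\lambda_k)\to 1$ along any feasible sequence with $|\lambda_k|\to\infty$ (using that all quantities $F_X(\widehat{X}_{\lambda_k})$, $F_X^2(\widehat{X}_{\lambda_k})$, $F_X(X\vee\widehat{X}_{\lambda_k})$, $F_X(\widehat{X}_{\lambda_k})\vee Y$ lie in $[0,1]$ and have expectations tending to one), then truncate at a radius $M$ beyond which $\Phi_j$ exceeds $\Phi_j(\lambda_0)$. Your explicit treatment of the $\lambda$-dependent independent copy $Y^{(k)}$ via convergence in distribution to a constant, and the tightness remark for $\Lambda_g$, are just more detailed versions of steps the paper handles summarily, so no substantive difference.
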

\begin{proof}
Sequences of random variables $\{  F_X (\widehat{X}_{\lambda_k})  \}$,  $\{  F_X^2 (\widehat{X}_{\lambda_k})  \}$, $\{  F_X (X\vee \widehat{X}_{\lambda_k})  \}$,  $\{  F_X(\widehat{X}_{\lambda_k})\vee Y  \}$
are uniformly integrable since they are a.s.  bounded by zero and one. Hence, their expectations tend to one as  $|\lambda_k| \to + \infty$ by properties of a c.d.f. Then it holds
$$
\Phi_j(\lambda_k) \to 1, \quad   |\lambda_k| \to + \infty, \quad j=1,2,3.
$$
Take $M>0$ such that $\Phi_j(\lambda_k) >  \Phi_j(\lambda_0)$ for all $k$ such that $|\lambda_k|>M$,   $j=1,2,3$.
The assertion is proven.
\end{proof}

\begin{example}  Assume that there exists a $\lambda_0 \in\Lambda_g$ or $\Lambda$ such that
\begin{enumerate}
\item  $ \widehat{X}_{\lambda_0}=X$ a.s. This is the case for some  prediction functions $g$ if $X=X_{j_0}$ a.s., $j_0\in\{ 1,\ldots,n\}$. Then it can be easily shown that
$ \Phi_1(\lambda_0)= \Phi_2(\lambda_0)=1/2$, $ \Phi_3(\lambda_0)=  1/2-\gamma/3<1$ for all $\gamma>0$. 
\item   $ \widehat{X}_{\lambda_0}$ and $X$ are stochastically independent.  This can be the case if $X,\mathcal{X}_n$ form an $m$--dependent sequence with $m<n$. Then it can be easily shown that
$ \Phi_1(\lambda_0)= 2/3$ (cf. Example \ref{ex:GiniIndependent}), 
$$\Phi_2(\lambda_0)=1+\int_0^1 F_{Y_1}(x)(1-2x)dx <1$$ with $Y_1=F_X(\widehat{X}_{\lambda_0}),$ because
$\int_0^1 F_{Y_1}(x)(1-2x)dx=\int_0^1 (F_{Y_1}(x)-F_{Y_1}(1/2))(1-2x)dx <0$, and
$$\Phi_3(\lambda_0)\leq \Phi_2(\lambda_0) <1.$$
\end{enumerate}
\end{example}
\begin{example}  
Condition $\widehat{X}_{\lambda_k} \toP + \infty$ as $k\to\infty$ for any sequence $\{\lambda_k\}\subset \Lambda$  such that $\lambda_k=( \lambda_k(1),\ldots,\lambda_k(n))$,  $|\lambda_k| \to + \infty$ is satisfied for
$\Lambda=\R^n_+$, $\widehat{X}_{\lambda_k} =\sum_{j=1}^n \lambda_k(j)X_j$ or $\widehat{X}_{\lambda_k}=\max_{j=1,\ldots, n} \lambda_k(j)X_j$ and a.s. nonnegative random variables  $X_j$, $j=1,\ldots,n$.

\end{example}

The question of  uniqueness of a solution $\lambda$ to problems  \eqref{unconst:eq1},  \eqref{const:eq2}  and \eqref{eq:FinalPreidct} cannot be resolved in such generality. As illustrated  in the Gaussian case \cite{Dasetal22}, it will require further specification of the dependence structure of observations $X_j$ within the set $\mathcal{X}_n$, of the statistic $g$ and parameter set $\Lambda$. This will be done for some specific classes of random variables in forthcoming research.


\section{Excursion-based extrapolation of stationary heavy--tailed random fields}\label{sectExtrapol}

In this section, we will use the above  prediction approach  to extrapolate a real--valued strictly stationary ergodic random field  $X=\{X(t),t\in \R^d\}$ with absolutely continuous (but  possibly heavy--tailed) marginal distribution $F_{\theta_0}.$ We assume that $F_{\theta_0}$ belongs to an appropriate parametric family of possible marginal distributions $\{ F_\theta, \theta\in\Theta\}$, $\Theta\subseteq \R^k$. This ansatz can be useful, in particular, for heavy--tailed time series forecasting in insurance/finance ($d=1$, compare Section \ref{sect:Num})   or in image analysis ($d=2,3$) for upscaling of low-resolution 2D and 3D gray scale images (cf.  the so-called {\it super-resolution problem} \cite{Chang04,KimKwon10,Freeman11c.:markov,Dong16}).

Denote by $\Z_h=(h_1 \Z)\times \cdots \times (h_d \Z) $ the $d-$dimensional grid with mesh sizes $h=(h_1,\ldots,h_d)\in (0,+\infty)^d.$
Let $X$ be potentially observed at points $\mathbb{T}_0:=W_o\cap \Z_h,$ where $W_o\subset \R^d$ is a compact.  The observed values form a sample
 $\mathcal{X}_T:=\{X(t_j),t_j\in \mathbb{T}_0\}.$
  
  Let us predict the value $X(t)$ at a location $t \in \Z_h$, $t \not \in W_o$ from the knowledge of the so--called {\it forecast sample} $T_f:= \{t_1,\ldots,t_n\} \neq \emptyset,$ $T_f\subset \Z_h.$ The predictor $\widehat{X}_\lambda=g(\lambda,\mathbf{X}(T_f))$ with $\lambda\in \Lambda\subseteq\R^n$ and $\mathbf{X}(T_f):=(X(t_1),\ldots,X(t_n))^{\top}$ requires weights $\lambda$  to be a solution of minimization problems  \eqref{unconst:eq1},
 \eqref{eq:FinalPreidct} or \eqref{eq:FinalPreidct1}, i.e.,
 \begin{equation}\label{eq:minFctl_noConstraint}
\hat{\lambda}=\arg \min_{\lambda\in \Lambda}\left[2\E  \left[ F_{\theta_0} (X(t))\vee F_{\theta_0} ( \widehat{X}_\lambda) \right] - \E F_{\theta_0}(\widehat{X}_\lambda)  \right] ,
\end{equation}
 \begin{equation}\label{eq:minFctl}
\hat{\lambda}=\arg \min_{\lambda\in\Lambda}\left\{ 2\E  \left[ F_{\theta_0} (X(t))\vee F_{\theta_0} ( \widehat{X}_\lambda) \right]  - \E F_{\theta_0}(\widehat{X}_\lambda)  +\gamma 
\left[\E F_{\theta_0}^2(\widehat{X}_\lambda) -  \E [F_{\theta_0}(\widehat{X}_\lambda)\vee Y]  \right]   \right\} ,   \mbox{ or}
\end{equation}
 \begin{equation}\label{eq:minFctl1}
\hat{\lambda}=\arg \min_{\lambda\in\Lambda}\left\{ 2 \E  \left[   F_{\theta_0} (X(t))\vee F_{\theta_0} ( \widehat{X}_\lambda) \right]   - \E F_{\theta_0}(\widehat{X}_\lambda)  + \gamma 
\left[  \int_0^1 F_{F_{\theta_0} ( \widehat{X}_\lambda)}(y) \left[ F_{F_{\theta_0} ( \widehat{X}_\lambda)}(y)-2y\right]\, dy  \right]   \right\} ,   
\end{equation}
respectively, due to strict monotonicity of  $F_{\theta_0}$, where $Y$ is an independent copy of $F_{\theta_0}(\widehat{X}_\lambda)$ and $F_{F_{\theta_0} ( \widehat{X}_\lambda)}$ is the c.d.f. of the random variable $F_{\theta_0} ( \widehat{X}_\lambda)$.
As already mentioned in Section \ref{sectPredict}, we use  $$\widehat{X}_\lambda=\lambda^{\top}\mathbf{X}(T_f)$$ for infinitely divisible $X$ and    
  $$\widehat{X}_\lambda=\max_{j=1,\ldots, n}\lambda(j) X(t_j)$$ with $\lambda=(\lambda(1),\ldots,\lambda(n))^\top$ for max--stable $X$.

The excursion predictors \eqref{const:eq2}, \eqref{eq:minFctl_noConstraint} -- \eqref{eq:minFctl1}  are consistent under very mild assumptions. 
\begin{theorem}
Let the random field $X=\{X(t),t\in\R^d\}$  be stochastically continuous. Assume that there exists $\tilde{\lambda}_k\in \Lambda_g$ such that $g(\tilde{\lambda}_k,\mathbf{X}(T_f))=X(t_k)$ a.s.  for any $k=1,\ldots,n$  and
$\min_{j=1,\ldots,n} \|t_j - t\|_2 \to 0$ as $n\to \infty.$ Then
$\widehat{X}_{\hat \lambda}(t)\stackrel{P}{\to}X(t)$ as $n\to \infty,$ where
$\widehat{X}_{\hat \lambda}$ is an excursion predictor  \eqref{const:eq2}, \eqref{eq:minFctl_noConstraint},  \eqref{eq:minFctl} or \eqref{eq:minFctl1}.
\end{theorem}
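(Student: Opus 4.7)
The plan is to exploit the hypothesis that every observed value $X(t_k)$ can be reproduced exactly by some $\tilde{\lambda}_k\in\Lambda_g$: this supplies a concrete comparison point for the minimizer $\hat\lambda$ that is asymptotically as good as $X(t)$ itself. Concretely, I would let $k_n\in\arg\min_{j=1,\ldots,n}\|t_j-t\|_2$ and set $\tilde\lambda^{(n)}:=\tilde\lambda_{k_n}$, so that $\widehat{X}_{\tilde\lambda^{(n)}}=X(t_{k_n})$ almost surely and, by strict stationarity, $X(t_{k_n})\eqd X(t)$, whence $\tilde\lambda^{(n)}\in\Lambda_g$ also realises the distributional constraint.

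First I would verify that this oracle is consistent in the excursion metric. Since $\|t_{k_n}-t\|_2\to 0$ and $X$ is stochastically continuous, $X(t_{k_n})\toP X(t)$; continuity of $F_{\theta_0}$ and the continuous mapping theorem then give $F_{\theta_0}(X(t_{k_n}))\toP F_{\theta_0}(X(t))$, and since these random variables take values in $[0,1]$, dominated convergence yields $L^1$-convergence. By Corollary~\ref{corEm} this says exactly that $E_{F_{\theta_0}}(X(t),X(t_{k_n}))\to 0$.

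Next I would rewrite each of the four target functionals in \eqref{const:eq2}, \eqref{eq:minFctl_noConstraint}, \eqref{eq:minFctl}, \eqref{eq:minFctl1} in the canonical form
$$E_{F_{\theta_0}}(X(t),\widehat{X}_\lambda)+\gamma\,\rho^2\bigl(F_{F_{\theta_0}(\widehat{X}_\lambda)},F_U\bigr),$$
modulo additive constants independent of $\lambda$, using identity \eqref{EF1:eq1} together with $\E F_{\theta_0}(X(t))=1/2$. The coefficient $\gamma$ is zero in the two unpenalized problems and strictly positive in the penalized ones, while the penalty term is nonnegative (as a squared Wasserstein distance) and vanishes at $\tilde\lambda^{(n)}$ because $F_{\theta_0}(X(t_{k_n}))\sim U(0,1)$. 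Optimality of $\hat\lambda$ therefore forces
$$E_{F_{\theta_0}}(X(t),\widehat{X}_{\hat\lambda})\le E_{F_{\theta_0}}(X(t),\widehat{X}_{\hat\lambda})+\gamma\rho^2\le E_{F_{\theta_0}}(X(t),X(t_{k_n}))\longrightarrow 0.$$

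Finally, $\E|F_{\theta_0}(\widehat{X}_{\hat\lambda})-F_{\theta_0}(X(t))|\to 0$ gives $F_{\theta_0}(\widehat{X}_{\hat\lambda})\toP F_{\theta_0}(X(t))$, and since $F_{\theta_0}$ is continuous and strictly increasing on the support of $X$ (as required for $E_{F_{\theta_0}}$ to be a metric, cf.\ Theorem~\ref{thm:metric}), its inverse is continuous on the relevant range and a further continuous mapping step yields $\widehat{X}_{\hat\lambda}\toP X(t)$. The main obstacle I anticipate is the bookkeeping of constants needed to bring the penalized functionals \eqref{eq:minFctl} and \eqref{eq:minFctl1} into the clean form $E_{F_{\theta_0}}+\gamma\rho^2$: the expression \eqref{eq:Wasserstn} produces an additive constant $-\gamma/3$ coming from $Y\sim U(0,1)$ which must be shown to appear identically on both sides of the optimality inequality so that it cancels, leaving only the excursion-metric comparison to drive the argument.
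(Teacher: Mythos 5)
Your proposal is correct and follows essentially the same route as the paper's proof: compare the minimizer with the oracle weights $\tilde\lambda_{k_n}$ reproducing the nearest observation $X(t_{k_n})$, use stochastic continuity plus boundedness of $F_{\theta_0}\in[0,1]$ to get $L^1$-convergence of the bound, note that the Wasserstein penalty is nonnegative and vanishes at the oracle since $F_{\theta_0}(X(t_{k_n}))\sim U(0,1)$ by stationarity, and conclude by the continuous mapping theorem. The only cosmetic difference is that you unify the four functionals into the single form $E_{F_{\theta_0}}+\gamma\rho^2$ up to $\lambda$-independent constants, whereas the paper runs the same comparison separately for each problem.
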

\begin{proof}
For the method \eqref{eq:minFctl_noConstraint}, let
$\widehat{X}_{\hat \lambda}=g(\hat{\lambda},\mathbf{X}(T_f))$,  where $
    \hat{\lambda}:=\arg \min_{\lambda\in \Lambda}\left[2\E F_X (X\vee \widehat{X}_\lambda)-\E F_X (\widehat{X}_\lambda)\right].$
Denote by $\tilde{\lambda}_n\in \Lambda_g\subset \Lambda$ such that $g(\tilde{\lambda}_n,\mathbf{X}(T_f))=X({\tilde{t}_{n}}),$ where  $\tilde{t}_{n}=\arg\min_{j=1,\ldots,n} \|t_j - t\|_2.$  Then
excursion metric writes
\begin{align*}
    E_{F_X}(\widehat{X}_{\hat \lambda}(t),X(t))&=\E |F_X (X(t))-F_X( \widehat{X}_{\hat \lambda}(t))|
    =\min_{\lambda\in \Lambda}\left[ 2  \E F_X (X(t)\vee  \widehat{X}_{ \lambda}(t) )-\E F_X ( \widehat{X}_{ \lambda}(t) )\right]-\frac{1}{2}\\
    &\leq 2 \E F_X (X(t)\vee  \widehat{X}_{ \tilde{\lambda}_n}(t))-\E F_X (\widehat{X}_{ \tilde{\lambda}_n}(t))-\frac{1}{2}
    = 2\E F_X (X(t)\vee X(\tilde{t}_n))-\E F_X (X(\tilde{t}_n))-\frac{1}{2}\\
    &=\E |F_X (X(t))-F_X( X(\tilde{t}_n))|.
\end{align*}
Sequence $\{F_X (X(t))-F_X( X(\tilde{t}_n))\}_{n\geq1}$ is obviously uniformly integrable and  $F_X( X(\tilde{t}_n))\stackrel{P}{\to}F_X(X(t))$ as $\tilde{t}_n\to t.$ Therefore, $F_X( \widehat{X}_{\hat \lambda}(t)) \to F_X(X(t))$ in $L^1$-- sense and, consequently, $F_X(\widehat{X}_{\hat \lambda}(t))\stackrel{P}{\to}F_X(X(t)).$ Due to the continuous mapping theorem, $\widehat{X}_{\hat \lambda}(t)\stackrel{P}{\to}X(t)$ as $n\to \infty.$

The excursion predictor given by \eqref{const:eq2}  is consistent as well by similar arguments: 
 \begin{align*}
    E_{F_X}(\widehat{X}_{\hat \lambda}(t),X(t))&=\E |F_X (X(t))-F_X( \widehat{X}_{\hat \lambda}(t))|
    =\min_{\lambda\in \Lambda_g}\left[ 2  \E F_X (X(t)\vee  \widehat{X}_{ \lambda}(t) )-1\right] \\
    &\leq 2 \E F_X (X(t)\vee  \widehat{X}_{ \tilde{\lambda}_n}(t))-1
    = 2\E F_X (X(t)\vee X(\tilde{t}_n))-1
    =\E |F_X (X(t))-F_X( X(\tilde{t}_n))|\to 0
\end{align*}
as $n\to \infty$, where $\tilde{\lambda}_n\in \Lambda_g$ such that $g(\tilde{\lambda}_n,\mathbf{X}(T_f))=X(\tilde{t}_n)$ a.s.

As for the excursion predictor  \eqref{eq:minFctl}, choose a  $\gamma>0$ and write for $Y_1=F_X ( \widehat{X}_{ \hat\lambda}(t))$, $Y_2\sim U(0,1)$
 that 
 \begin{align*}
    E_{F_X}(\widehat{X}_{\hat \lambda}(t),X(t))+ \gamma \rho\left( F_{Y_1}, F_{Y_2} \right)    &=     2 \E F_X (X(t)\vee  \widehat{X}_{ \hat{\lambda}}(t))-\E F_X (\widehat{X}_{ \hat{\lambda}}(t)) \\
    &+ \gamma\left[     \E F_X^2 ( \widehat{X}_{\hat \lambda}(t) )    -\E (F_X (  \widehat{X}_{ \hat \lambda}(t) )  \vee Y ) \right] +\frac{\gamma}{3}   -\frac{1}{2}\\
    &=\min_{\lambda\in \Lambda}\left[ 2  \E F_X (X(t)\vee  \widehat{X}_{ \lambda}(t) )-\E F_X ( \widehat{X}_{ \lambda}(t) ) 
   \right. \\
    &\left. + \gamma\left(     \E F_X^2 ( \widehat{X}_{ \lambda}(t) )  -\E (F_X (  \widehat{X}_{  \lambda}(t) )  \vee Y ) \right) \right]  +\frac{\gamma}{3}   -\frac{1}{2}\\
    &\leq 2 \E F_X (X(t)\vee  \widehat{X}_{ \tilde{\lambda}_n}(t))-\E F_X (\widehat{X}_{ \tilde{\lambda}_n}(t))-\frac{1}{2}\\
    &+\gamma\left( \frac{1}{3}+  \E F_X^2 ( \widehat{X}_{ \tilde \lambda_n}(t) ) -\E (F_X (  \widehat{X}_{  \tilde \lambda_n}(t) )  \vee Y )  )  \right)\\
    &= 2\E F_X (X(t)\vee X(\tilde{t}_n))-\E F_X (X(\tilde{t}_n))-\frac{1}{2}\\
    & +\gamma\left( \frac{1}{3}+  \E F_X^2 ( X(\tilde{t}_n) ) -\E (F_X ( X(\tilde{t}_n))  \vee Y )  )  \right)\\
    &=\E |F_X (X(t))-F_X( X(\tilde{t}_n))|\to 0
\end{align*}
as $n\to \infty,$ since $X(\tilde{t}_n)\eqd X(t)$.
\end{proof}

If the parameter $\theta_0$ is unknown, we assess it by a statistic $\widehat \theta$  in order to find $F_{\widehat \theta},$ which is a plug-in estimator of $F_{\theta_0}.$ 
By ergodicity of $X$, we substitute expectations in \eqref{eq:minFctl_noConstraint}-\eqref{eq:minFctl1} by the corresponding empirical moments.
The  prediction problems above get the form
\begin{equation}\label{eq:EmpMoment}
     \bar\Phi_k(\lambda):=\sum_{j=1}^N   Q_j^{(k)}(\lambda)  \to \min_{\lambda\in\Lambda}, \quad k=2,3,4,
\end{equation}
where 
\begin{equation}\label{eq:Q_unconstraint}
Q^{(2)}_j(\lambda):=2F_{\widehat \theta} (X(t+h_j))\vee   F_{\widehat \theta} ( g( \lambda, \mathbf{X}(T_f+h_j)))- F_{\widehat \theta} ( g( \lambda, \mathbf{X}(T_f+h_j)))
\end{equation} 
for unconstrained prediction \eqref{eq:minFctl_noConstraint},
\begin{equation}\label{eq:Q}
Q^{(3)}_j(\lambda):=Q^{(2)}_j(\lambda)+\gamma\left[ F^2_{\widehat \theta} ( g( \lambda, \mathbf{X}(T_f+h_j)))-F_{\widehat \theta} ( g( \lambda, \mathbf{X}(T_f+h_j))) \vee Y_{j}\right]
\end{equation} 
for the  (approximatively) law-preserving prediction \eqref{eq:minFctl}, and
\begin{align}
Q^{(4)}_j(\lambda):&=Q^{(2)}_j(\lambda) + \gamma F^2_{\widehat \theta} ( g( \lambda, \mathbf{X}(T_f+h_j)))  \nonumber  \\
&-\frac{\gamma}{N} \left[  F_{\widehat \theta} ( g( \lambda, \mathbf{X}(T_f+h_j)))  +2 \sum_{i=1}^{j-1} 
F_{\widehat \theta} ( g( \lambda, \mathbf{X}(T_f+h_i))) \vee   F_{\widehat \theta} ( g( \lambda, \mathbf{X}(T_f+h_j))) \label{eq:Q1}
 \right]
\end{align} 
for the law-preserving prediction variant \eqref{eq:minFctl1}, where the convention $\sum_{i=1}^0=0$ is used. Here
$T_f+h_j$, $j=1,\ldots, N$ with  $\{h_1,\ldots,h_N\}:=\{s\in \Z_h: s+T_f\cup \{t\}\subset  \mathbb{T}_0   \}$ are the so--called {\it learning samples}, and 
 $Y_j$ are independent copies of $F_{\widehat \theta} ( g( \lambda, \mathbf{X}(T_f+h_j)))$. In practice, the sample $\{Y_1,\ldots,Y_N\}$ can be obtained for each $\lambda$ by bootstrap, i.e. resampling of 
\begin{equation}\label{eq:Bootstr}\{ F_{\widehat \theta} ( g( \lambda, \mathbf{X}(T_f+h_1))),\ldots, F_{\widehat \theta} ( g( \lambda, \mathbf{X}(T_f+h_N)))\}.\end{equation} 
 
\begin{remark}
The extrapolation methods \eqref{eq:minFctl_noConstraint}-\eqref{eq:minFctl1}  with $\theta_0$ replaced by $\theta_t$ can be also used  for the extrapolation of non--stationary random fields $X$ if their marginal distributions  $F_{\theta_t}(x)=\pr(X(t)\le x)$ are known in advance. In this case,  the empirical moments  \eqref{eq:Q_unconstraint}-\eqref{eq:Bootstr} in the problem \eqref{eq:EmpMoment} have to be rewritten with 
$F_{\theta_t}$ in lieu of $F_{\widehat \theta}$.
\end{remark}

In order to find the minimum of $\bar\Phi_k(\lambda)$, we use a subgradient descent. Assume that $F_\theta$ has a density $p_\theta.$ 
Since the marginal distribution of $X$ is absolutely continuous and $X(t+h_j)\not\in   \mathbf{X}(T_f+h_j)$ for all $j$ a.s. if the joint probability density of $(X(t+h_j),  \mathbf{X}(T_f+h_j))$ exists, it is reasonable to assume that 
 $$
 \pr\left( X(t+h_j)= g( \lambda, \mathbf{X}(T_f+h_j)) \right)=0,
 $$
 $$
 \pr\left( Y_j=  F_{\widehat \theta} ( g( \lambda, \mathbf{X}(T_f+h_j))) \right)=0, \quad j=1,\ldots, N,
 $$
$$
 \pr\left( F_{\widehat \theta} ( g( \lambda, \mathbf{X}(T_f+h_i))=  F_{\widehat \theta} ( g( \lambda, \mathbf{X}(T_f+h_j))) \right)=0, \quad i, j=1,\ldots, N.
 $$

Then the subgradients $\nabla^*Q^{(k)}_j(\lambda)$, $k=2,4$, write with probability one as
  \begin{align*}
    \nabla^*Q^{(2)}_j(\lambda)&=  \Bigg[2\mathbbm{1}\{X(t+h_j)<g( \lambda, \mathbf{X}(T_f+h_j))  \} -1\Bigg]   p_{\widehat \theta}(g( \lambda, \mathbf{X}(T_f+h_j)) )  \nabla^* g( \lambda, \mathbf{X}(T_f+h_j)),
\end{align*}
 \begin{align*}
    \nabla^*Q^{(4)}_j(\lambda)&=  \nabla^*Q^{(2)}_j(\lambda) +\gamma  \Bigg[ 2 F_{\widehat \theta}( g( \lambda, \mathbf{X}(T_f+h_j)))  - \frac{1}{N}  -   \frac{2}{N} \sum_{i=1}^{j-1} \mathbbm{1}\{F_{\widehat \theta} ( g( \lambda, \mathbf{X}(T_f+h_i))< F_{\widehat \theta} ( g( \lambda, \mathbf{X}(T_f+h_j)))\}    \Bigg. \\
    & -  \left. \frac{2}{N} \sum_{i=1}^{j-1} \mathbbm{1}\{F_{\widehat \theta} ( g( \lambda, \mathbf{X}(T_f+h_i))> F_{\widehat \theta} ( g( \lambda, \mathbf{X}(T_f+h_j)))\} \right] p_{\widehat \theta}(g( \lambda, \mathbf{X}(T_f+h_i)) )  \nabla^* g( \lambda, \mathbf{X}(T_f+h_i)),
\end{align*}
respectively. The subgradient $\nabla^*Q^{(3)}_j(\lambda)$  can be written as 
\begin{align*}
    \nabla^*Q^{(3)}_j(\lambda)&= \nabla^*Q^{(2)}_j(\lambda) +\gamma \Bigg[ 2 F_{\widehat \theta}( g( \lambda, \mathbf{X}(T_f+h_j)))  ) - \mathbbm{1}\{g( \lambda, \mathbf{Y}(T_f+h_j))<  g( \lambda, \mathbf{X}(T_f+h_j))\}   \Bigg] \\
    &
    \times  p_{\widehat \theta}(g( \lambda, \mathbf{X}(T_f+h_j)) )  \nabla^* g( \lambda, \mathbf{X}(T_f+h_j))\\
    &
    -\gamma  \mathbbm{1}\{g( \lambda, \mathbf{Y}(T_f+h_j))\geq  g( \lambda, \mathbf{X}(T_f+h_j))\}   p_{\widehat \theta}(g( \lambda, \mathbf{Y}(T_f+h_j)) )  \nabla^* g( \lambda, \mathbf{Y}(T_f+h_j)),
\end{align*}
where $\mathbf{Y}$ is an independent copy of $\mathbf{X}.$




Here  is  e.g.  $    \nabla^* g( \lambda, \mathbf{X}(T_f+h_j))=   \mathbf{X}(T_f+h_j)$ for infinitely divisible $X$ and 
 $$    \nabla^* g( \lambda, \mathbf{X}(T_f+h_j))=  \left(   \mathbf{X}(t_i+h_j) \mathbbm{1}\left\{ \lambda^{(i)} X(t_i+h_j)=  \max_{k=1,\ldots, n}\lambda^{(k)} X(t_k+h_j)  \right\} , \quad i=1,\ldots, n \right)$$
 for max--stable $X$.
 
 
 \begin{remark}
 The speed of convergence of $ \frac{1}{N} \sum_{j=1}^N   Q_j^{(k)}(\lambda) $ to  its expectation $   \Phi_k(\lambda)  $ as $N\to\infty$ in the ergodic theorem for correlated data $Q_j^{(k)}$ highly depends on their correlation rate. Hence, large values of $N$ ($N\approx 1000$) are recommended for practical use.
  {In the case of random processes with infinite variance, the speed of convergence can be determined via their $\beta-$mixing properties, see e.g \cite{yu1994rates}. }
 \end{remark}
 
Now use the classical (batch) subgradient descent \cite{Kiwiel01} with  e.g. $\lambda_0\in\Lambda$,
\begin{equation}\label{eq:BatchGr}
\lambda_{l+1}=\Pi_\Lambda \left[ \lambda_l-\frac{\eta_l}{N} \sum_{j=1}^N  \nabla^*Q_j(\lambda_l)\right] , \quad l\in \N,\end{equation} 
where $\Pi_\Lambda[\cdot]$ is the metric projection onto $\Lambda$ and $\eta_l>0$ is a step length factor which has to be tuned numerically.   The iterations stop at some $l^*$ whenever $| \lambda_{l^*+1}-\lambda_{l^*} |<\delta$ for some small threshold value $\delta>0$ yielding $\widehat \lambda=\lambda_{l^*}$.
In order to avoid costly computations at each step, a stochastic (or online) subgradient descent \cite{Bottou98} may be performed instead. Here, at each step
$ l\in \N,$ we do
\begin{equation}\label{eq:OnlineGr}
\lambda_{l+1}=\Pi_\Lambda \left[ \lambda_l-{\eta_l}   \nabla^*Q_j(\lambda_l)\right] , \end{equation} 
where $j$ is chosen at random uniformly from  $\{1,\ldots, N\}$.
For the sequence $ \{  \eta_l\}$, we may require  
$$ \sum\limits_{l=1}^\infty \eta_l=\infty, \quad \sum\limits_{l=1}^\infty \eta_l^2<\infty, 
$$ 
for instance, $\eta_l=l^{-1}$. 
 In addition, the {\it Polyak-Ruppert averaging} can be used after a burn--in period of length $l_0$: the resulting weight vector 
$$
\widehat \lambda= \frac{1}{l-l_0} \sum\limits_{l=l_0}^{l^*-1} \lambda_l.
$$
Alternatively, we may set $\widehat \lambda$ to be equal to  the value of $ \lambda_l$ with the smallest target functional $\Phi(\lambda_l)$.
\begin{remark} \label{Rem:6} In general, we assume  the weight space $\Lambda$ to be a convex cone within $\R^n$. A more accurate choice of $\Lambda$ should reflect the constraints onto the support of the distribution of $X(t)$.   For instance, if $X(t)\ge 0$ a.s. we may take $\Lambda=\R_+^n$. However, for practical reasons of avoiding back projection $\Pi_\Lambda$ onto $\Lambda$ at each iteration step \eqref{eq:BatchGr} or \eqref{eq:OnlineGr} of subgradient descent methods,  it is better to modify the predictor $\widehat X_\lambda$ and make $g( \lambda, \cdot)$ be dependent on $\lambda^2=(\lambda^2(1),\ldots,\lambda^2(n))^\top$  instead of $\lambda=(\lambda(1),\ldots,\lambda(n))^\top$. Doing so, the formulas for the subgradient  $ \nabla^* g( \lambda, \mathbf{X}(T_f+h_j))$ have to be modified accordingly.
\end{remark}

\begin{remark} \label{Rem:7}  The advantage of  optimization formulation   \eqref{eq:Q1} in comparison with \eqref{eq:Q} is that a bootstrap step (generating instances $Y_j$ and thus increasing the variance of the forecast) is not needed for the evaluation of the subgradient $ \nabla^*Q_j(\lambda)$. However, there is a fee to pay: a more slow calculation of $ \nabla^*Q_j(\lambda)$ due to the sum inside.
\end{remark}

It is worth mentioning that many existing optimization routines (e.g. those built in R or Mathlab) can be used to minimize the functional \eqref{eq:EmpMoment} in lieu of \eqref{eq:BatchGr} or \eqref{eq:OnlineGr}. They sometimes work more accurately but are rather slow, cf. Table \ref{T:RT}.
Under several additional assumptions, the  a.s. convergence of the stochastic gradient descent method \eqref{eq:OnlineGr}  can be shown; however, this would blow up the length of this paper and thus will be the matter of future papers.

\section{Numerical examples}\label{sect:Num}

In this section, we test our prediction methods on  simulated data.
Although our approach works for random fields on $W\subset \R^d$, we take $d=1$ in order to simplify computations and the representation of results.

A random process $X$ is observed at points $\mathbb{T}_0=W_{o}\cap \mathbb{Z}_{h_1} \cup T_f,$ where $h_1=0.02,$ $W_{o}=[0,30-h_1],$ and $T_f$ is the forecast sample. We take $T_f=\{30.0,30.1,\ldots,30.9\}$ for the extrapolation and $T_f=\{30.0,30.5,31.0\ldots,34.5\}$ for the interpolation problems. In both cases $n=|T_f|=10.$

We predict the values $X(t)$ at locations $t\in \mathbb{Z}_{h_1}\cap[30,35],$ $t\not \in \mathbb{T}_0$ via predictor $\hat{X}_\lambda=\lambda^{\top}\mathbf{X},$ where $\lambda\in \R^n$ and $\mathbf{X}=(X(t_1),\ldots,X(t_n))^{\top},$ $t_j\in T_f,$ $j=1,\ldots,n.$

We solve the arising minimization problems by the   stochastic subgradient descent method from Section~\ref{sect:Num}. Our preliminary numerical studies show that {$\gamma=5$ is a good value for the constrained optimization.} The minimization sequence $\Phi(\lambda_l)$ obtained by  the classical (batch) subgradient descent very often stacks in some local minima. A stochastic (or online) subgradient descent has much better performance in a sense that $\bar\Phi_k(\lambda_l)$ reaches lower levels. After a series of numerical experiments, we can recommend the use of the sequence $\eta_l=10 (10+l)^{-\beta}$ with $\beta=0.7$ in \eqref{eq:OnlineGr} {with $l\leq 300$}. Moreover, the value of $\beta$ has a two-sided effect. Decreasing  $\beta$, the volatility of $\lambda_l$ increases, which produces more possibilities of gaining  a global minimum. But then the sequence of $\lambda_l$ converges slower, and the number of computational steps increases as well.

One can also use the result of the stochastic subgradient descent method as an initial value for other optimization routines. This combines the advantages of two procedures, but increases the runtimes. 

\begin{table}[]
    \centering
    \begin{tabular}{cccc}
         Method & Stochastic subgradient (300 iterations) & R: optim & Wolfram Mathematica: NMinimize\\
         \hline
         Runtime & 0.9 & 7.1 & 254\\
         \hline
    \end{tabular}
    \caption{The runtime for the solution of minimization problem  \eqref{eq:minFctl_noConstraint} at one point $t.$  The CPU times are given in seconds for a PC with an Intel(R) Core(TM) i5,  CPU 3.5 GHz Quad-Core processor and 32 GB RAM.}
    \label{T:RT}
\end{table}

The  choice of an initial value $\lambda_0$  is crucial for the good convergence of \eqref{eq:OnlineGr}. 
 Based on our experience, we provide the following practical recommendations. First, produce a finite number of "candidates" $\lambda_{0,j}$  for $\lambda_0.$ 
 Then the initial value $\lambda_0$ is chosen as $\mbox{argmin}_{j} \bar\Phi_k(\lambda_{0,j})$.
One possible set of such candidates may be  $\lambda_{0,j}=(0,\ldots,1,\ldots,0),$ $j=1,\ldots, n$ {or the value obtained from the optimization problem for the neighbour point $t$.}
Another  one may  consist of a fixed number  of $\lambda$'s generated randomly on $[0,1]^n$ such that $\|\lambda\|_1=1.$

We choose three models of stationary infinitely divisible random processes: Gaussian stochastic process, moving average with $\alpha$-stable marginals, and an autoregressive model with Student $t-$distributed innovations.

As Gaussian random processes are well studied and their behaviour is determined by the covariance function, they allow us to compare the performance of our prediction method via excursions and some popular procedures, cf. kriging.

In the $\alpha$--stable moving average case, we model the dependence within  ${\bf X}$ by a deterministic kernel function and determine the marginal distribution of $X$ via the choice of a random integrator measure.  If $\alpha\in(0,2)$,  the variance of $X(t)$ is infinite, and the $L^2$--forecasting techniques are not applicable.

We simulate also an autoregressive model in order to study two effects: the accuracy of the solutions of the minimization problems, and the method's performance without knowing the  marginal distribution.

The R code for our prediction methods can be found in \cite{RCode22Makogin}.

{The marginal distributions functions are taken as $F_{\hat{\theta}}$ from the corresponding parametric family, and parameters' estimates $\hat{\theta}$ are obtained from the one sample trajectories $X(t),t\in W_o \cap \Z_{h_1}.$}

{We also do not solve the minimization problems for the times points from the forecast sample $T_f.$ Naturally, we put $\hat{X}(t_k)=X(t_k),$ $t_k\in T_f$. We see from the further plots that the predicted trajectories are continuous functions and $\hat{X}(t)\approx X(t_k)$ if $t$ is close to $t_k.$ Therefore, we can avoid computations for $t\in T_f.$}

\subsection{Gaussian random processes}
In the case of a Gaussian random process $X,$ the exact solution $\hat{\lambda}_e(t)$ of minimization problem \eqref{const:eq2}  is given in \cite[Theorem 3.5]{Dasetal22} by
$$\hat{\lambda}_e(t)=\sqrt{\Var X(t)}\frac{\Sigma^{-1}c_t}{\sqrt{c_t^{\top}\Sigma^{-1}c_t}},$$
where $\Sigma$ is the covariance matrix of $\mathbf{X}$ and $c_t=\left( \Cov(X(t), X(t_1)),\ldots,  \Cov(X(t), X(t_n))   \right) .$
For numerical illustration, we take $X$ with standard normal marginal distribution and covariance function $C(t)=e^{-|t|/2},\, t\in\R.$

For each $T_f,$  we find numerical solutions $\hat{\lambda}_u(t)$ of \eqref{eq:minFctl_noConstraint} and $\hat{\lambda}_c(t)$ of \eqref{eq:minFctl},  $t\in \mathbb{Z}_{h_1}\cap[30,35] \setminus T_f$. We compare the corresponding predicted trajectories $\hat{X}_u$ and $\hat{X}_c$ with $\hat{X}_e$ obtained via \cite[Theorem 3.5]{Dasetal22}  and simple kriging $\hat{X}_{sk},$  see Figures \ref{T:Gauss:Fig:int} (interpolation) and \ref{T:Gauss:Fig:extr} (extrapolation).

\begin{remark}
{The extrapolated trajectory $\hat{X}_e$ becomes constant shortly after  the last point of observation $t_n$. Mathematically, $\hat{\lambda}_e(t)\to (0,\ldots,0,1)^\top$ as $t\gg t_n.$ Indeed, $\Sigma$ does not depend on $t$ and $\frac{c_t}{C(t-t_n)}=\left(e^{-(t_n-t_1)/2},\ldots,1\right)^\top=c_{t_n}$ for $C(t)=e^{-|t|/2}.$
Thus, $$\Sigma\hat{\lambda}_e(t)=\sqrt{\Var X(t)} \frac{c_t}{C(t-t_n)}\left(\frac{c_t^\top}{C(t-t_n)}\Sigma^{-1}\frac{c_t}{C(t-t_n)}\right)^{-1/2}= \sqrt{\Var X(0)}c_{t_n} \left(c_{t_n}^\top\Sigma^{-1}c_{t_n}\right)^{-1/2},$$ 
and, consequently, $\hat{\lambda}_e(t)= (0,\ldots,0,1)^\top.$}
\end{remark}

One can observe that the trajectories of $\hat{X}_u$ and $\hat{X}_{sk}$ are relatively close, which may indicate that the solution of unconstrained minimization problem \eqref{eq:minFctl_noConstraint} approximates the minimizers obtained by the simple kriging method.

Figure \ref{T:Gauss:Fig:int} also shows that the trajectory $\hat{X}_c$ is not so close to $\hat{X}_e.$ This effect has two sources. First, there is no exact constraint of the equality of marginal distributions of the predictor and the random process. Second, the minimization functional is approximated by its sample mean. Hence, one should increase the size $N$ of the learning sample and the weight $\gamma$ in order to obtain a closer match between $\hat{\lambda}_c(t)$ and $\hat{\lambda}_e(t).$

While the prediction weights $\hat{\lambda}_u$ and $\hat{\lambda}_c$ are computed based on one learning sample, the quality of prediction is evaluated on 1000 independently simulated trajectories of $X$ on $[30,35]\cap \Z_{h_1}.$ We compute  the corresponding sample values of excursion metric $E_{F_X}(X(t),\hat{X}(t))$ presented in Figures \ref{G:Gauss:Fig:int} and \ref{G:Gauss:Fig:extr}. The similarity between the marginal distributions  of $X(t)$ and $\hat{X}(t)$  is measured by Wasserstein distance $\rho(F(X(t),F(\hat{X}(t))),$ whose values can be found in Figures \ref{W:Gauss:Fig:int} and \ref{W:Gauss:Fig:extr}.

\begin{figure}[p]
    \centering
    \includegraphics[width=0.8\linewidth]{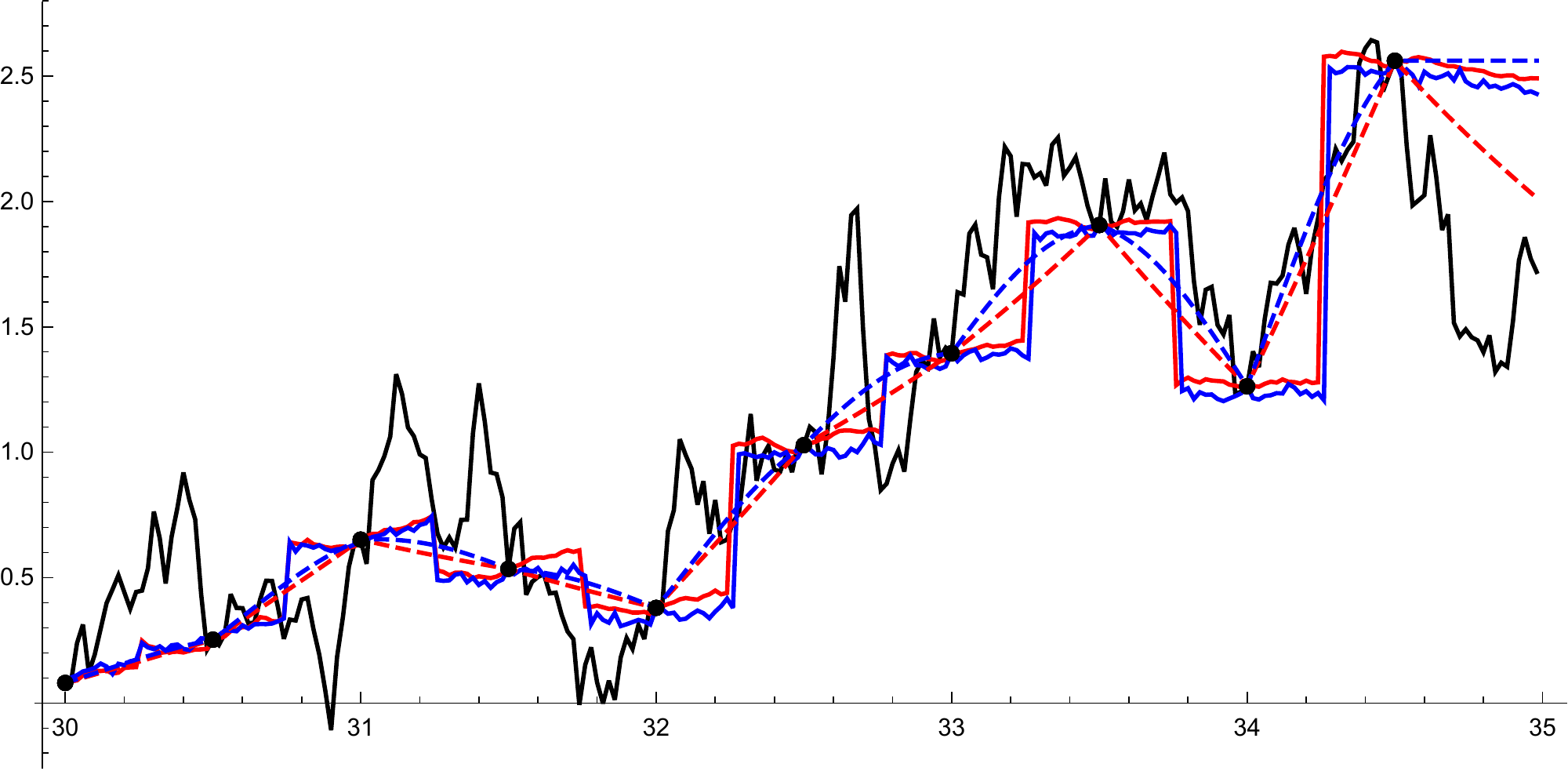}
    \caption{Interpolation of a Gaussian random process $X$. True trajectory $X(t)$ (black), predicted trajectories $\hat{X}_u$ (red, solid), $\hat{X}_c$ (blue, solid), $\hat{X}_{sk}$ (red, dashed), $\hat{X}_{e}$ (blue, dashed).}
    \label{T:Gauss:Fig:int}
\end{figure}
\begin{figure}[p]
    \centering
    \includegraphics[width=0.8\linewidth]{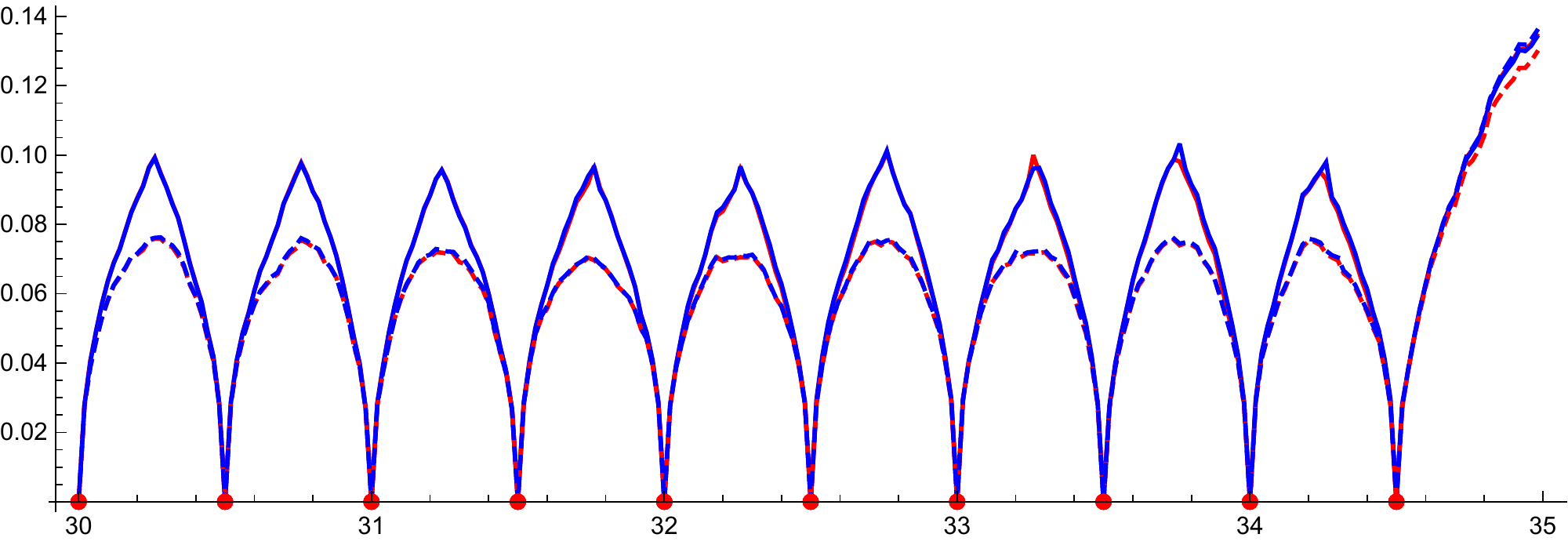}
    \caption{Interpolation of a Gaussian random process $X$. Excursion metric for predictors $\hat{X}_u$ (red, solid), $\hat{X}_c$ (blue, solid), $\hat{X}_{sk}$ (red, dashed), $\hat{X}_{e}$ (blue, dashed).}
    \label{G:Gauss:Fig:int}
\end{figure}
\begin{figure}[p]
    \centering
    \includegraphics[width=0.8\linewidth]{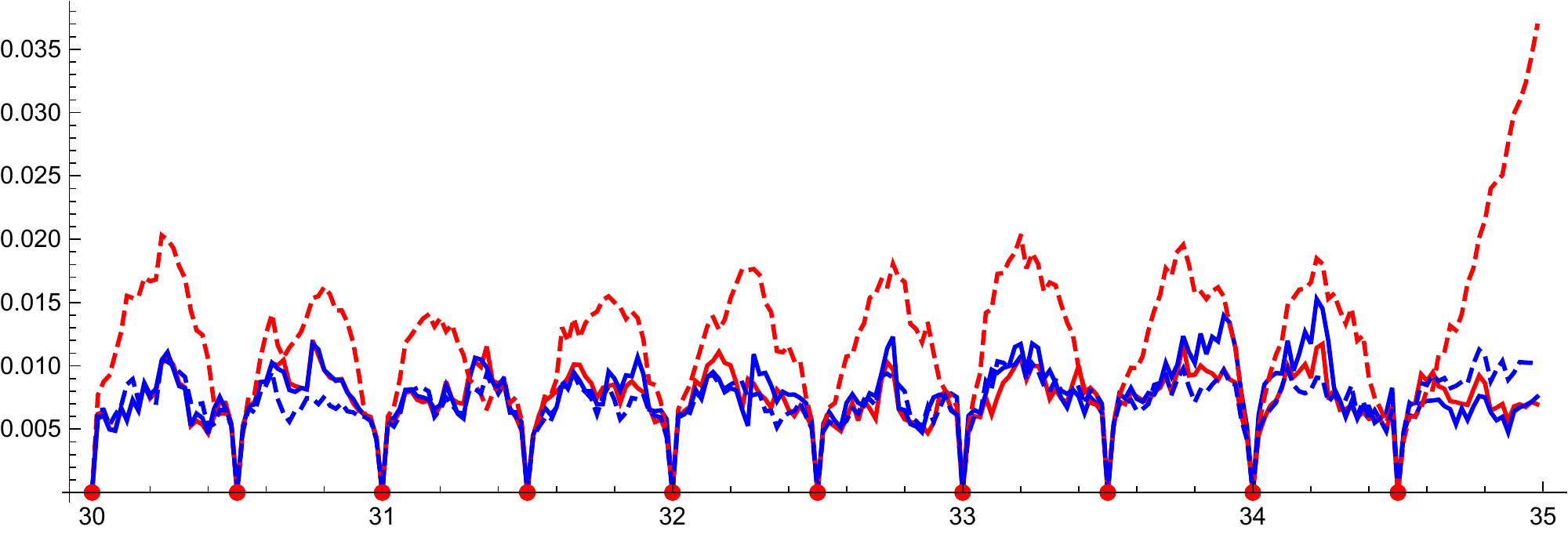}
    \caption{Interpolation of a Gaussian random process $X$. Wasserstein distance between $F(X)$ and predictors $F(\hat{X}_u)$ (red, solid), $F(\hat{X}_c)$ (blue, solid), $F(\hat{X}_{sk})$ (red, dashed), $F(\hat{X}_{e})$ (blue, dashed).}
    \label{W:Gauss:Fig:int}
\end{figure}

\begin{figure}[p]
    \centering
    \includegraphics[width=0.8\linewidth]{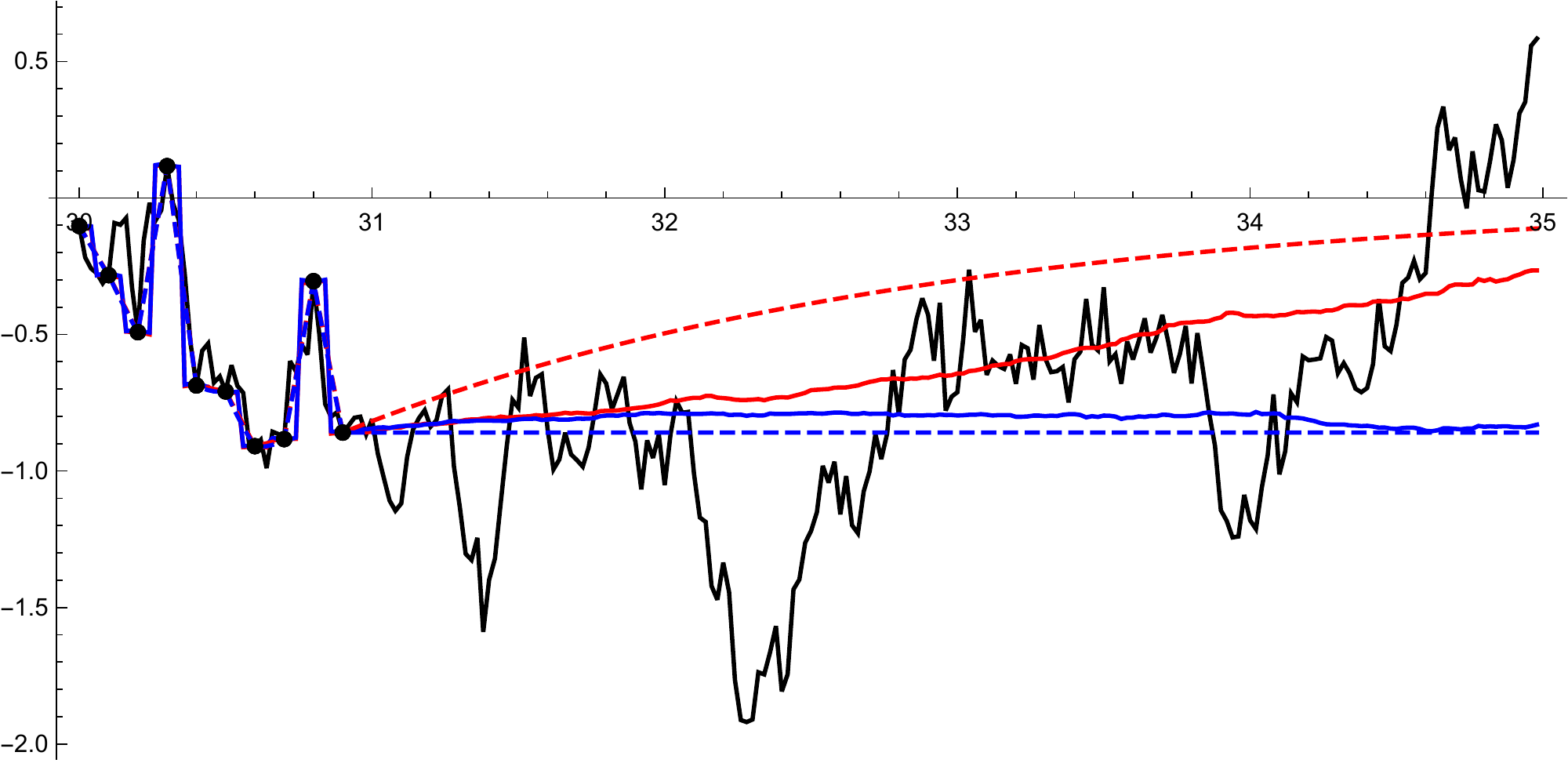}
     \caption{Extrapolation of a Gaussian random process $X$. True trajectory $X(t)$ (black), predicted trajectories $\hat{X}_u$ (red, solid), $\hat{X}_c$ (blue, solid), $\hat{X}_{sk}$ (red, dashed), $\hat{X}_{e}$ (blue, dashed).}
    \label{T:Gauss:Fig:extr}
\end{figure}
\begin{figure}[p]
    \centering
    \includegraphics[width=0.8\linewidth]{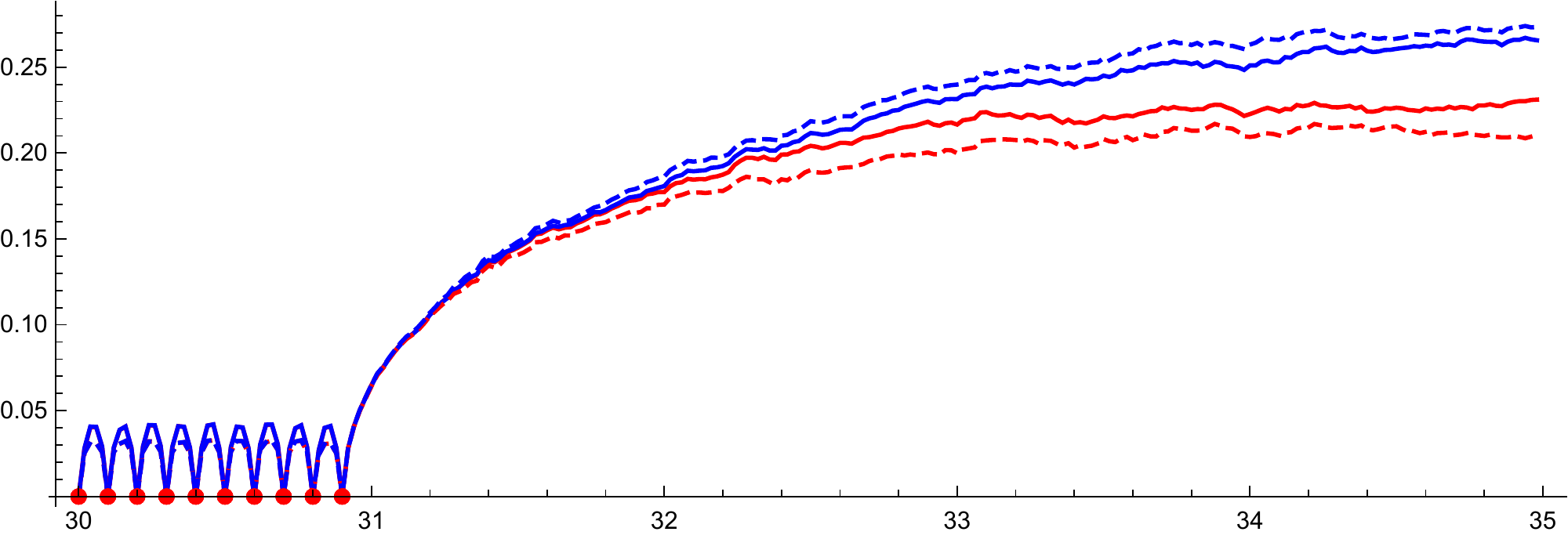}
    \caption{Extrapolation of a Gaussian random process $X$. Excursion metric for predictors $\hat{X}_u$ (red, solid), $\hat{X}_c$ (blue, solid), $\hat{X}_{sk}$ (red, dashed), $\hat{X}_{e}$ (blue, dashed).}
    \label{G:Gauss:Fig:extr}
\end{figure}
\begin{figure}[p]
    \centering
    \includegraphics[width=0.8\linewidth]{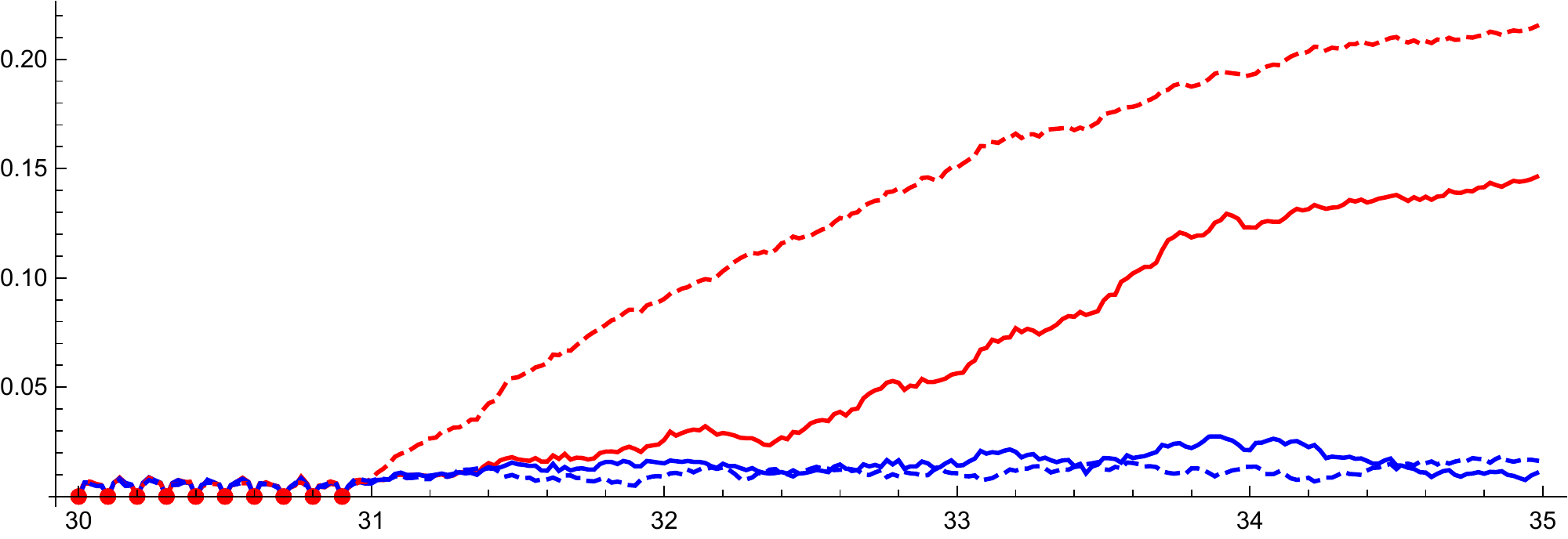}
    \caption{Extrapolation of a Gaussian random process $X$. Wasserstein distance between $F(X)$ and predictors $F(\hat{X}_u)$ (red, solid), $F(\hat{X}_c)$ (blue, solid), $F(\hat{X}_{sk})$ (red, dashed), $F(\hat{X}_{e})$ (blue, dashed).}
    \label{W:Gauss:Fig:extr}
\end{figure}

\subsection{$\alpha$-stable moving averages}

We consider the moving average process $X_\alpha=\{X_\alpha(t),\,t\in \Z_{h_1}\}$ given by
$X_\alpha(t)=\sum_{x\in \Z}m(t/{h_1}-x)\xi_\alpha(x)$ where $\xi_\alpha(x)$ are independent $S_\alpha(1,\beta,0)-$random variables and $m:\Z\to\R_+$ is a kernel function such that $\|m\|_\alpha:=\left(\sum_{x\in \Z}m^\alpha(x)\right)^{1/\alpha}<\infty.$   $X_\alpha$ is stationary with  marginal distribution $S_\alpha(\|m\|_\alpha,\beta,0)$.

Two cases of heaviness of the tails  are chosen:  Cauchy distribution ($\alpha=1,$ $\beta=0$) and L\'{e}vy distribution ($\alpha=0.5,$ $\beta=1$). 
 The kernel function is given by
$$m(x)=\begin{cases} e^{-0.02x}\left(1-e^{-0.02}\right)\left(1-e^{-5.02}\right)^{-1} \mathbbm{1}(x\in \Z\cap[0,251]),\quad&\alpha=1,\\
e^{-0.02x} \left(1-e^{-0.01}\right)^2\left(1-e^{-2.51}\right)^{-2}  \mathbbm{1}(x\in \Z\cap[0,251]),\quad&\alpha=0.5.
\end{cases}$$

It holds $\|m\|_\alpha=1$ and thus $X_\alpha(t)\sim S_\alpha(1,\beta,0).$ We choose $S_{0.5}(1,1,0)$ and $S_1(1,0,0)$ because there are simple analytical formulas for their c.d.f.'s :
\begin{equation}
    F_{X_{0.5}}(y)=\frac{1}{\sqrt{2\pi}}y^{-3/2}e^{-\frac{1}{2y}}\mathbbm{1}\{y>0\},\quad F_{X_1}(y)=\frac{1}{2}+\frac{1}{\pi}\arctan(y),\quad y\in \R.
\end{equation}
Moreover, $\E |X_\alpha(t)|=+\infty$ and $\E X_\alpha^2(t)=+\infty$ in both cases.

For each $T_f,$ $X_{0.5},$ and $X_{1},$ we find numerical solutions $\hat{\lambda}_u(t),$ and $\hat{\lambda}_c(t),$  $t\in \mathbb{Z}_{h_1}\cap[30,35] \setminus T_f$ of minimization problems \eqref{eq:minFctl_noConstraint}--\eqref{eq:minFctl}, which leads to predicted trajectories $\hat{X}_{0.5,u}, \hat{X}_{1,u}$ and $\hat{X}_{0.5,c}, \hat{X}_{1,c},$ respectively, see Figures \ref{T:Cauchy:Fig:int},\ref{T:Cauchy:Fig:extr},\ref{T:Levy:Fig:int}, and \ref{T:Levy:Fig:extr}.

One can observe that the predicted trajectory $\hat{X}_{1,c}$ in  Figure \ref{T:Cauchy:Fig:int} look like a step-wise functions and the extrapolated trajectories in Figure  \ref{T:Cauchy:Fig:extr} are quite volatile. In order to understand the source of these phenomena,  we apply two different methods of numerical solution. For $\alpha=1,$ we use stochastic subgadient descent method \eqref{eq:OnlineGr} and, for $\alpha=0.5,$ we apply Remark \ref{Rem:6} and existing optimization routine in R, avoiding the modification of subgradients.  

The predicted trajectories in Figures \ref{T:Levy:Fig:int} and \ref{T:Levy:Fig:extr} seem to fit the real trajectory better. Therefore, the existing minimization procedures can be more stable and accurate than the stochastic subgradient descent algorithm, which is nonetheless compensated  by much larger runtimes.

We repeat the simulation and prediction procedure 1000 times and  compute  the corresponding values of excursion metrics $E_{F_{X_\alpha}}(X_\alpha(t),\hat{X}_{\alpha,\lambda}(t))$, see Figures \ref{G:Cauchy:Fig:int}, \ref{G:Cauchy:Fig:extr}, \ref{G:Levy:Fig:int} and \ref{G:Levy:Fig:extr}. The corresponding  Wasserstein distances $\rho(F(X_\alpha(t),F(\hat{X}_{\alpha,\lambda}(t)))$  are given in Figures \ref{W:Cauchy:Fig:int}, \ref{W:Cauchy:Fig:extr}, \ref{W:Levy:Fig:int} and \ref{W:Levy:Fig:extr}.

In Figures \ref{G:Gauss:Fig:extr}, \ref{G:Cauchy:Fig:extr}, and \ref{G:Levy:Fig:extr}, the excursion metrics tend asymptotically to value $1/3$ as the distance to the last observed point increases. This alludes to Example \ref{ex:GiniIndependent} and shows that predictor $\hat{X}_{\alpha,\lambda}(t)$ and true random variable $X_{\alpha}(t)$ become asymptotically independent (for large $t$).

{\begin{figure}[p]
    \centering
    \includegraphics[width=0.8\linewidth]{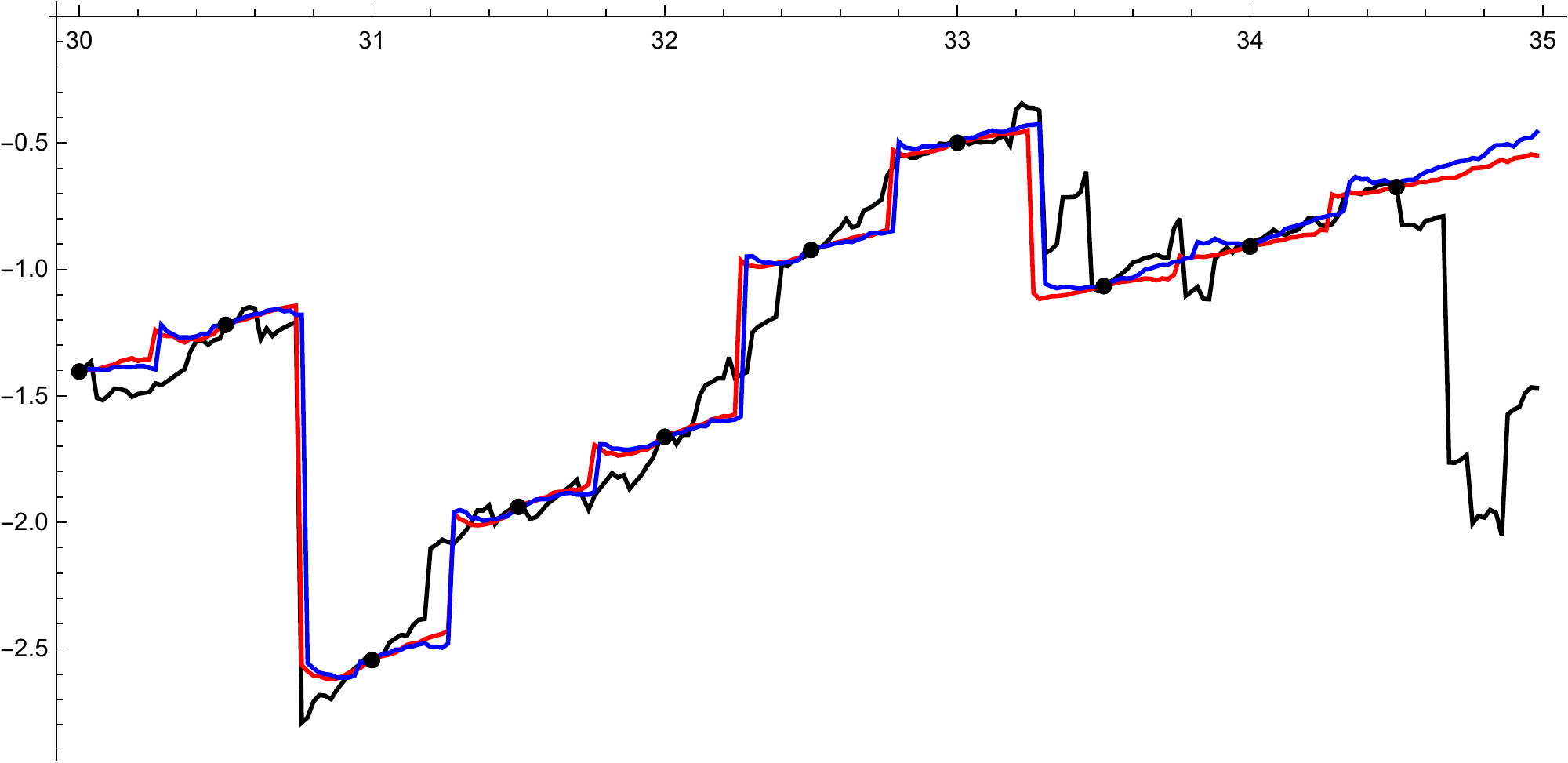}
    \caption{Interpolation of a moving average $X_{1}$ with Cauchy distributed marginals: True trajectory $X_{1}(t)$ (black), predicted trajectories $\hat{X}_{1,u}$ (red) and $\hat{X}_{1,c}$ (blue).}
    \label{T:Cauchy:Fig:int}
\end{figure}
\begin{figure}[p]
    \centering
    \includegraphics[width=0.8\linewidth]{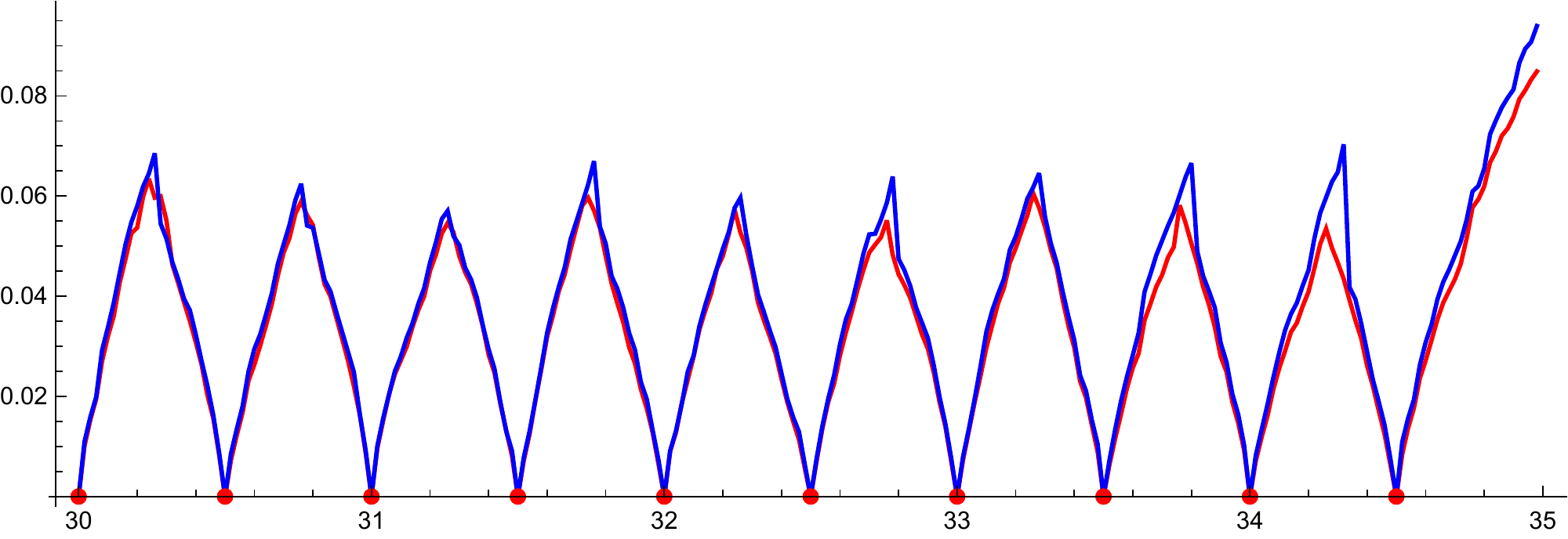}
    \caption{Interpolation of a moving average $X_{1}$ with Cauchy distributed marginals:  Excursion metric for predictors $\hat{X}_{1,u}$ (red) and $\hat{X}_{1,c}$ (blue).}
    \label{G:Cauchy:Fig:int}
\end{figure}
\begin{figure}[p]
    \centering
    \includegraphics[width=0.8\linewidth]{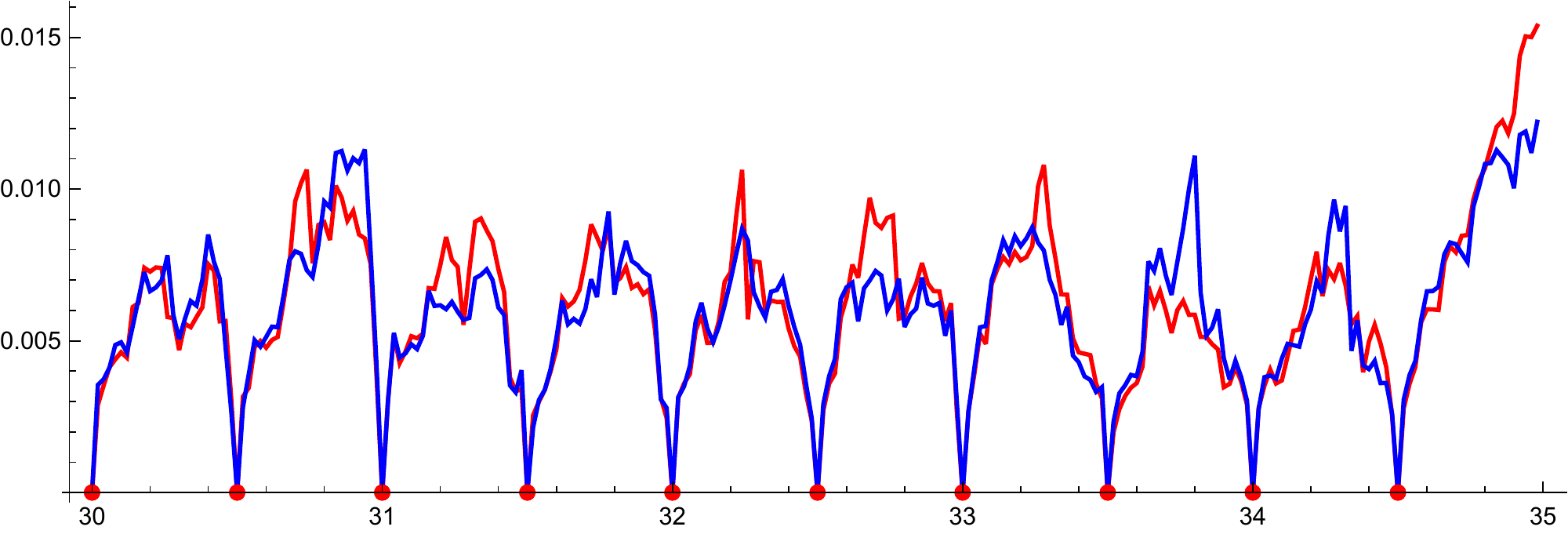}
    \caption{Interpolation of a moving average $X_{1}$ with Cauchy distributed marginals: Wasserstein distance between $F(X_{1})$ and predictors $F(\hat{X}_{1,u})$ (red), $F(\hat{X}_{1,c})$ (blue).}
    \label{W:Cauchy:Fig:int}
\end{figure}
}

{
\begin{figure}[p]
    \centering
    \includegraphics[width=0.8\linewidth]{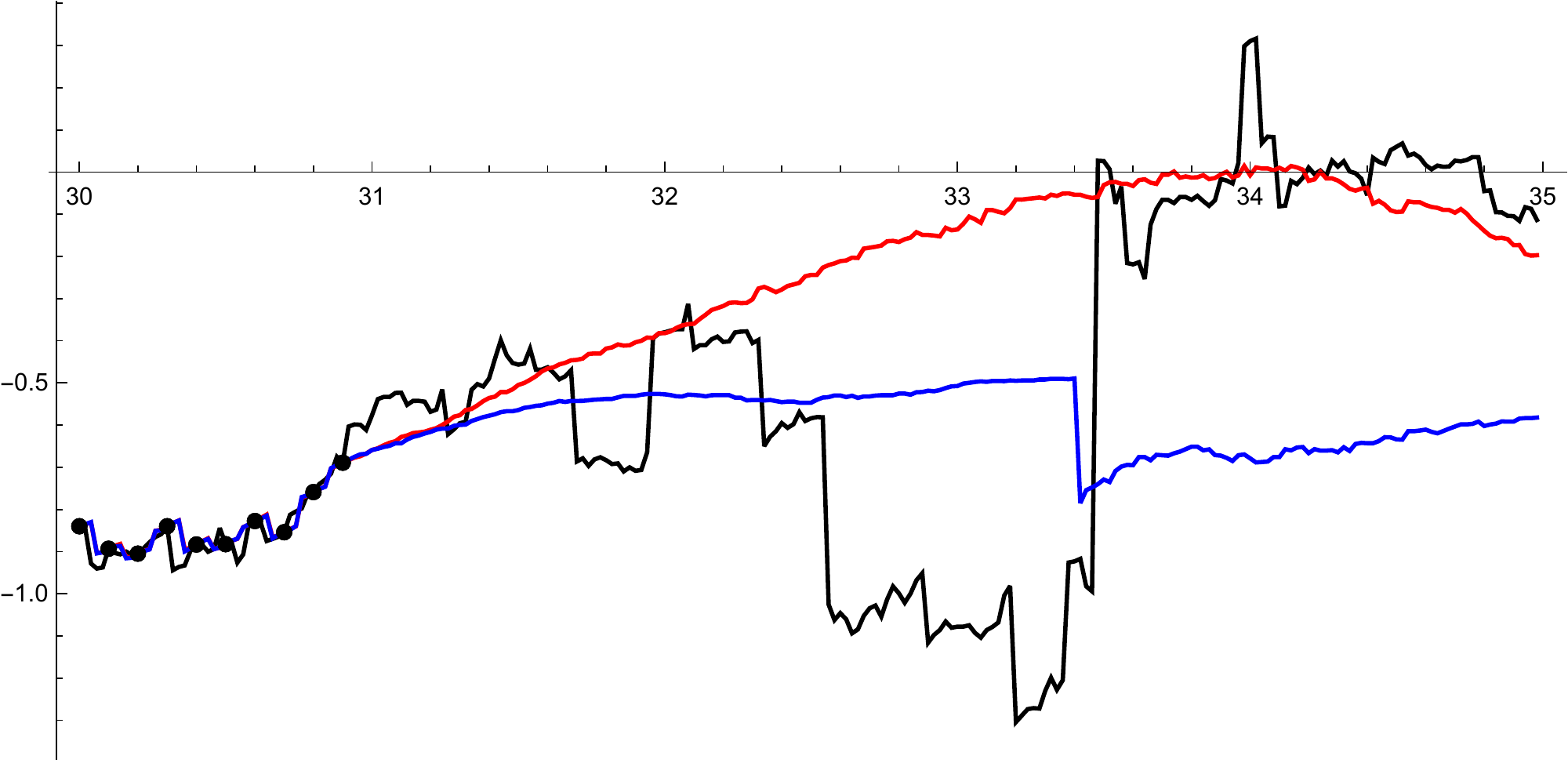}
    \caption{Extrapolation of a moving average $X_{1}$ with Cauchy distributed marginals: True trajectory $X_{1}(t)$ (black), predicted trajectories $\hat{X}_{1,u}$ (red) and $\hat{X}_{1,c}$ (blue).}
    \label{T:Cauchy:Fig:extr}
\end{figure}
\begin{figure}[p]
    \centering
    \includegraphics[width=0.8\linewidth]{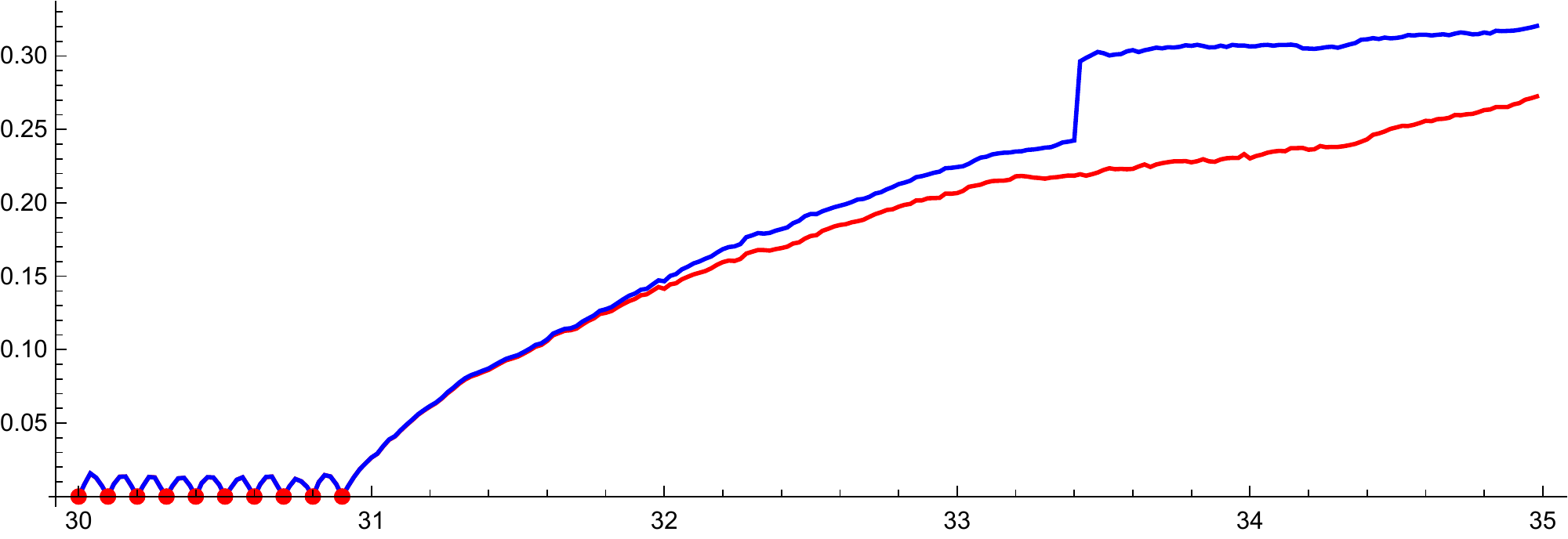}
    \caption{Extrapolation of a moving average $X_{1}$ with Cauchy distributed marginals: Excursion metric for predictors $\hat{X}_{1,u}$ (red) and $\hat{X}_{1,c}$ (blue).}
    \label{G:Cauchy:Fig:extr}
\end{figure}
\begin{figure}[p]
    \centering
    \includegraphics[width=0.8\linewidth]{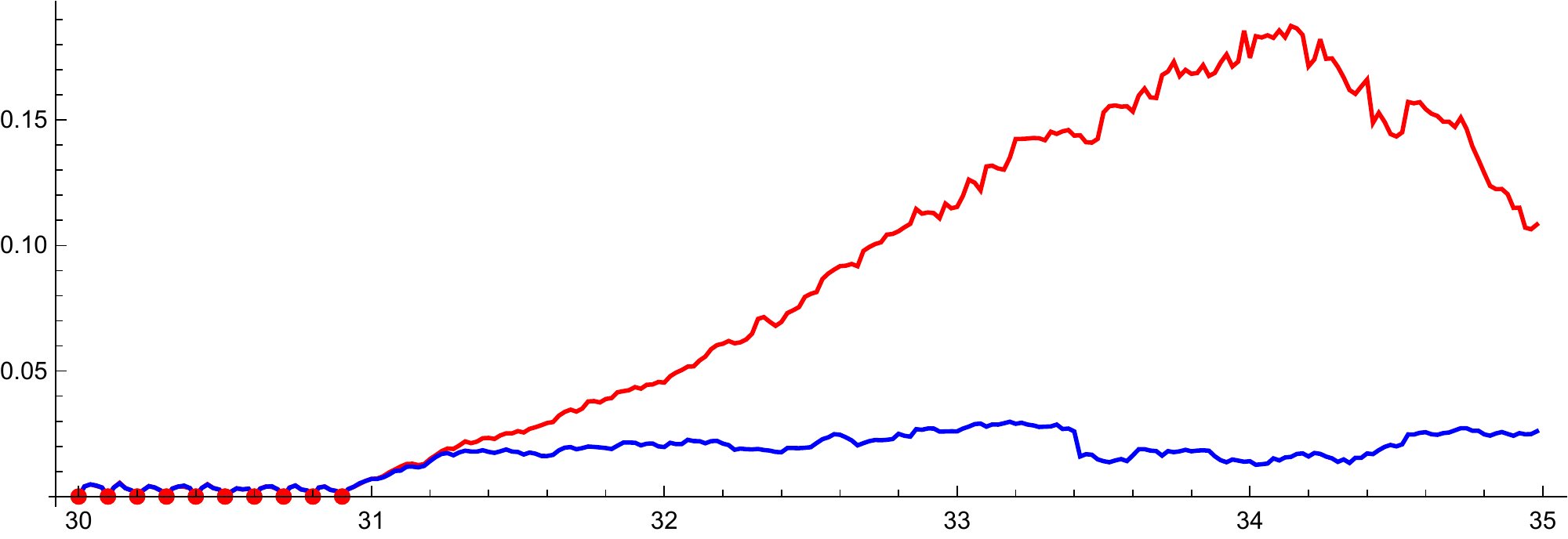}
    \caption{Extrapolation of a moving average $X_{1}$ with Cauchy distributed marginals: Wasserstein distance between $F(X_{1})$ and predictors $F(\hat{X}_{1,u})$ (red), $F(\hat{X}_{1,c})$ (blue).}
    \label{W:Cauchy:Fig:extr}
\end{figure}
}

{\begin{figure}[p]
    \centering
    \includegraphics[width=0.8\linewidth]{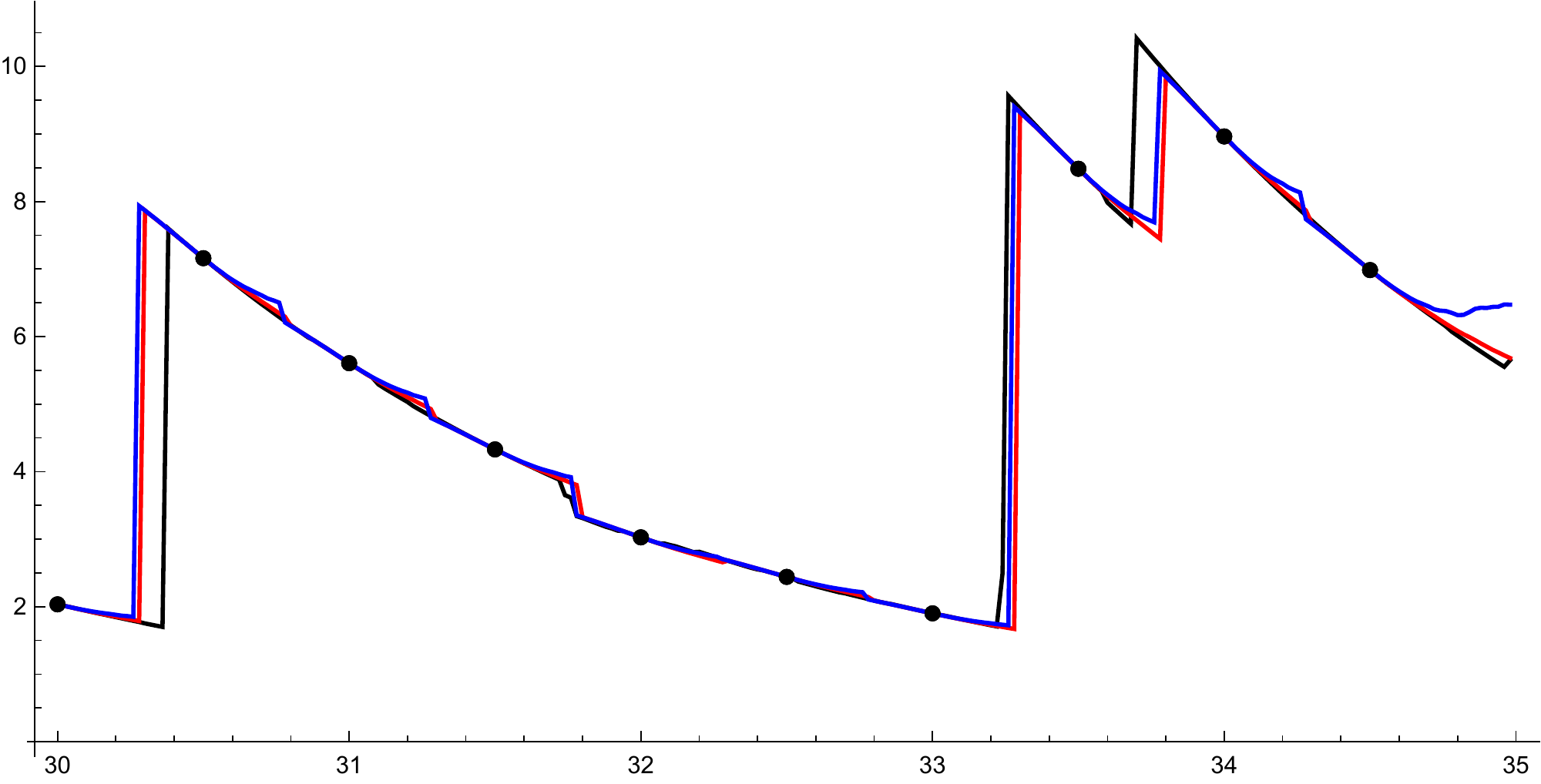}
    \caption{Interpolation of a moving average $X_{0.5}$ with L\'{e}vy distributed marginals: True trajectory $X_{0.5}(t)$ (black), predicted trajectories $\hat{X}_{0.5,u}$ (red) and $\hat{X}_{0.5,c}$ (blue).}
    \label{T:Levy:Fig:int}
\end{figure}
\begin{figure}[p]
    \centering
    \includegraphics[width=0.8\linewidth]{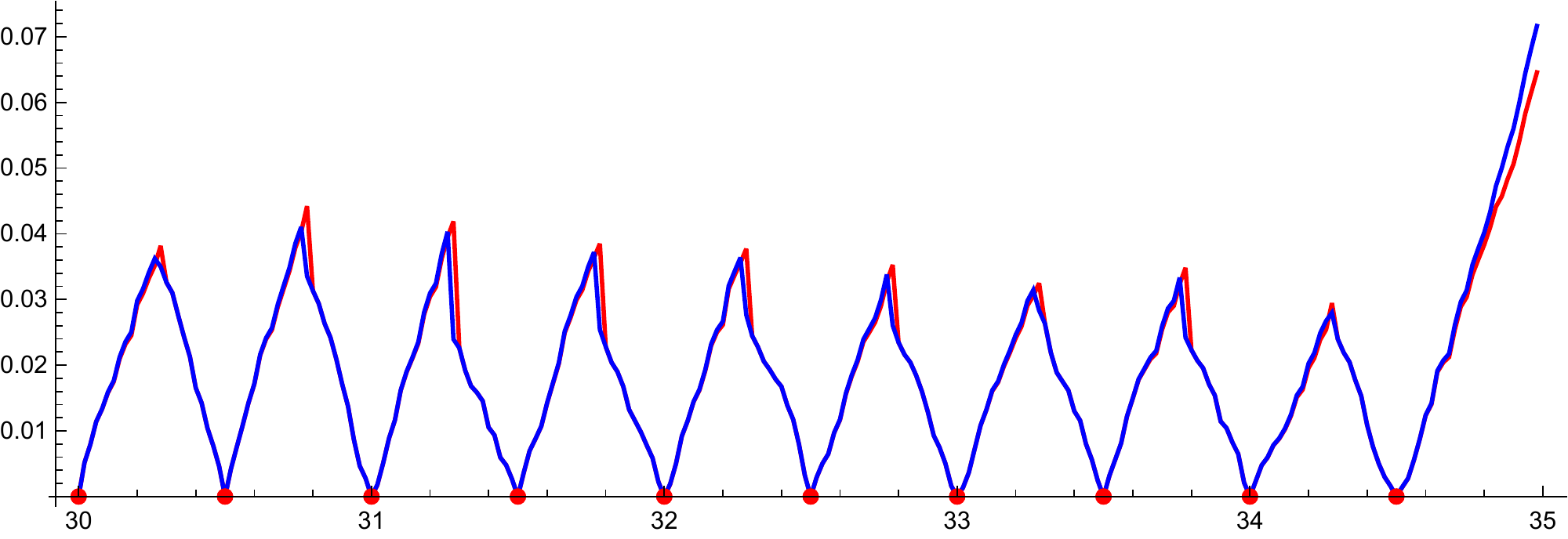}
    \caption{Interpolation of a moving average $X_{0.5}$ with L\'{e}vy distributed marginals: Excursion metric for predictors $\hat{X}_{0.5,u}$ (red) and $\hat{X}_{0.5,c}$ (blue).}
    \label{G:Levy:Fig:int}
\end{figure}
\begin{figure}[p]
    \centering
    \includegraphics[width=0.8\linewidth]{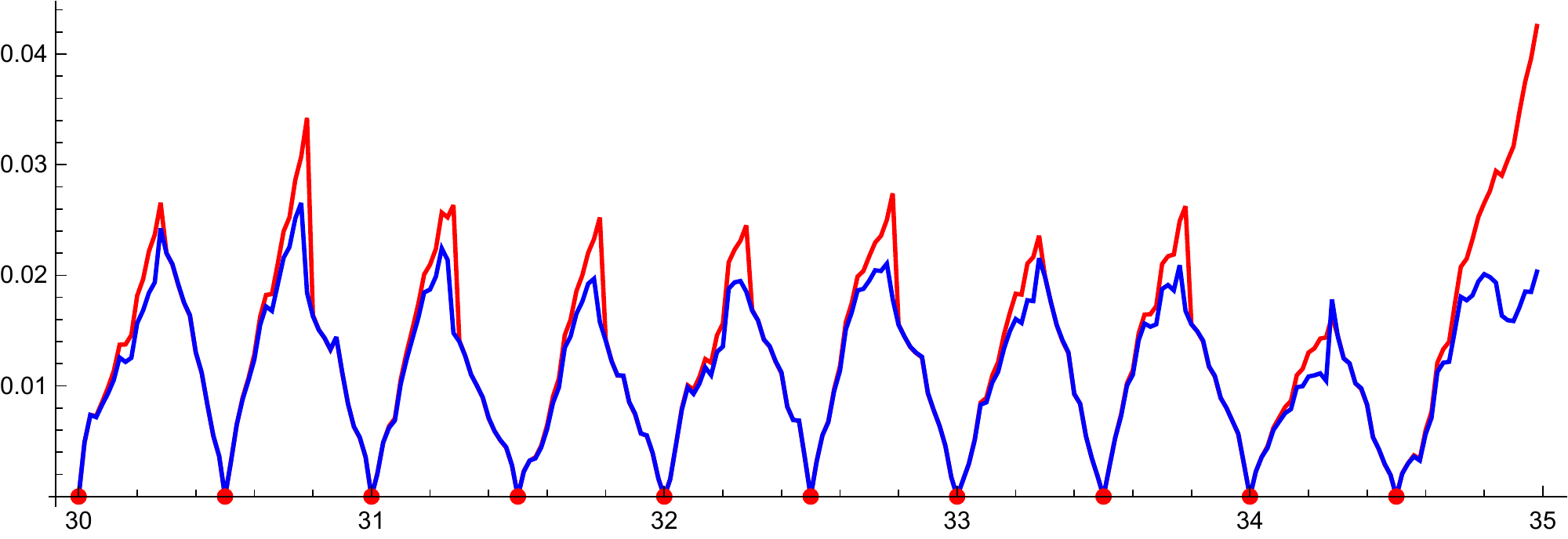}
    \caption{Interpolation of a moving average $X_{0.5}$ with L\'{e}vy distributed marginals: Wasserstein distance between $F(X_{0.5})$ and predictors $F(\hat{X}_{0.5,u})$ (red), $F(\hat{X}_{0.5,c})$ (blue).}
    \label{W:Levy:Fig:int}
\end{figure}
}

{
\begin{figure}[p]
    \centering
    \includegraphics[width=0.8\linewidth]{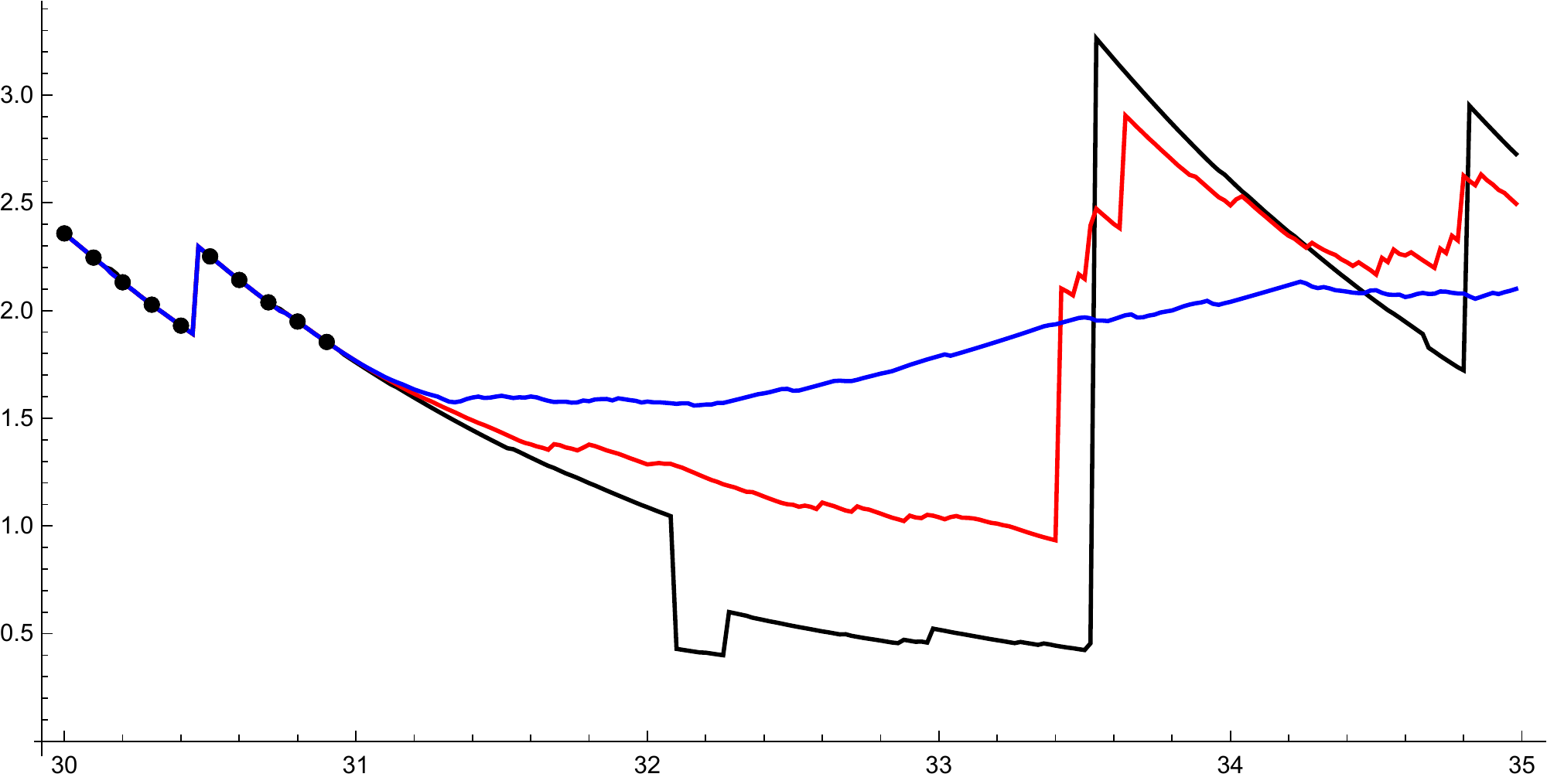}
    \caption{Extrapolation of a moving average $X_{0.5}$ with L\'{e}vy distributed marginals: True trajectory $X_{0.5}(t)$ (black), predicted trajectories $\hat{X}_{0.5,u}$ (red) and $\hat{X}_{0.5,c}$ (blue).}
    \label{T:Levy:Fig:extr}
\end{figure}
\begin{figure}[p]
    \centering
    \includegraphics[width=0.8\linewidth]{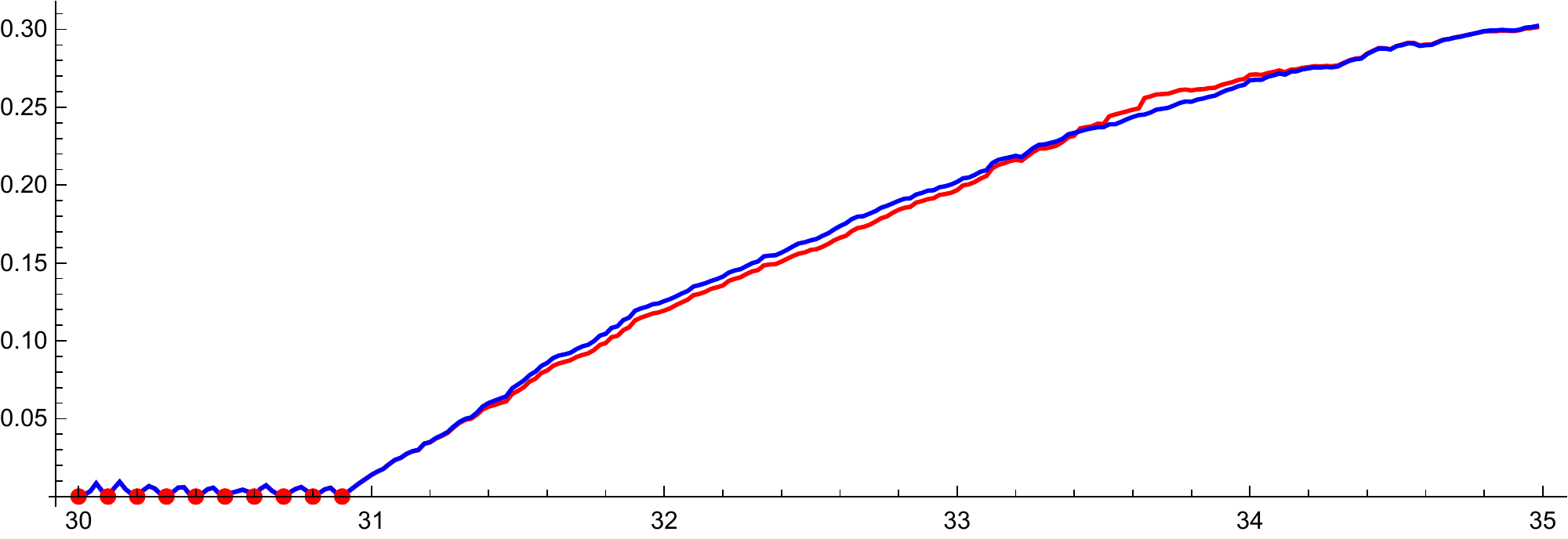}
    \caption{Extrapolation of a moving average $X_{0.5}$ with L\'{e}vy distributed marginals: Excursion metric for predictors $\hat{X}_{0.5,u}$ (red) and $\hat{X}_{0.5,c}$ (blue).}
    \label{G:Levy:Fig:extr}
\end{figure}
\begin{figure}[p]
    \centering
    \includegraphics[width=0.8\linewidth]{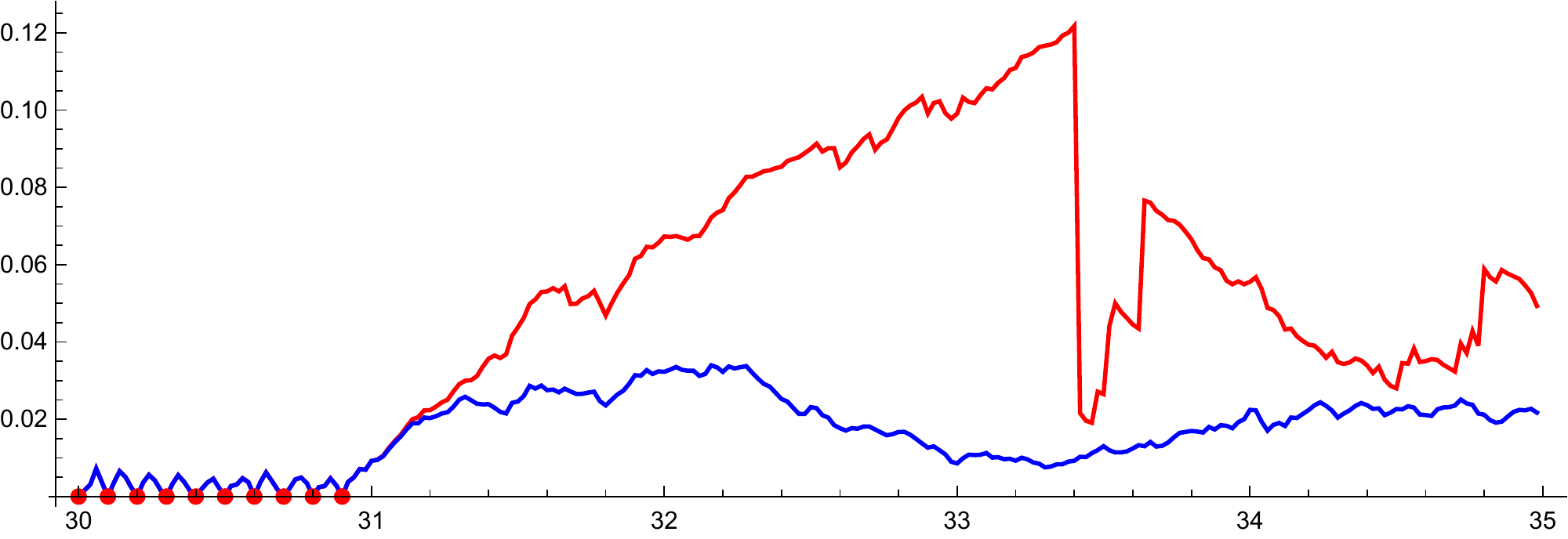}
    \caption{Extrapolation of a moving average $X_{0.5}$ with L\'{e}vy distributed marginals: Wasserstein distance between $F(X_{0.5})$ and predictors $F(\hat{X}_{0.5,u})$ (red), $F(\hat{X}_{0.5,c})$ (blue).}
    \label{W:Levy:Fig:extr}
\end{figure}
}
\subsection{Autoregressive stationary process}
A random process which can be  nicely predicted via a linear forecast is an autoregressive time series $AR(p)$ given by 
$$X(t)=\vf_{1}X(t-ph)+\vf_{2}X(t-ph+h)+\ldots+\vf_{p}X(t-h)+\xi_t,$$
where $\xi_{t},t\in h\Z$ are independent random variables and $\vf_k,k=1,\ldots,p$ are regression coefficients.

We examine our method on a simulated trajectory of $X(t),t\in \mathbb{T}_0\cup T_f$ with $T_f=\{30.0,30.1,30.2\}$, $h=0.02$ and $n=p=3.$ We take $\vf_1=0.1,$ $\vf_2=0.25,$ $\vf_3=0.5$ in order to get a stationary $AR(3)$ process, and set $\xi_t$ being  standard Student t-distributed  with  $\nu=0.8$ degrees of freedom which is infinitely divisible with $\E |\xi_t|=\infty.$ 
Indeed, it is not hard to verify that the roots of the equation $\sum _{j=1}^{p}\varphi_{j}z^{j}=1$, $z\in\mathbb{C}$ lie outside the unit circle, and the tail probabilities of $\xi_t$ are regularly varying, cf. conditions in \cite{Cline}.

\begin{remark}
In this case, we do not know the exact marginal distribution of $X$. However, we can use the excursion predictor $\hat{X}_\lambda$ from Definition \ref{def:excurs:pred} with c.d.f. function $F$ having the same support as $X(t).$ If $\lambda$ is defined via \eqref{unconst:eq1} or \eqref{eq:minFctl_noConstraint}, one can show that $\hat{X}_\lambda$ is a consistent estimator of $X(t)$ as well.
In the case of constrained minimization  \eqref{eq:minFctl}--\eqref{eq:minFctl1}, the distribution of $F(X(t))$ is not uniform any more, and the relation $\E F^2(\hat{X}_\lambda)-\E[F(\hat{X}_\lambda) \vee Y]$ does not correspond to the squared 2-Wasserstein distance $\rho^2(Law[F(\hat{X}_\lambda)],Law[F(X(t))]).$ By triangle inequality, it holds 
$$\rho(Law(\hat{X}_\lambda),Law[F(X(t))])\leq \rho(Law[F(\hat{X}_\lambda)],U[0,1])+\rho(U[0,1],Law[F(X(t))]),$$
and hence the minimization of $\rho(Law[F(\hat{X}_\lambda)],U[0,1])$ leads to the approximative minimization of  \\ $\rho(Law(\hat{X}_\lambda),Law[F(X(t))]),$ if $\rho(U[0,1],Law[F(X(t))])$ is small enough.
So, we expect our predictor estimator to be robust regarding the choice of function $F.$
\end{remark}

 Therefore, we use the  excursion metric $E_F$ with c.d.f. $F=F_{\hat{\theta}}$ being close to the true marginal distribution of $X$ and having the same support. We choose $F_\theta,$ $\theta=(\mu,\sigma,\nu)$ from the parametric family of Student t-distributions $ST(\mu,\sigma,\nu).$ Based on a simulated trajectory of $X,$ we find that  $F_{\hat{\theta}}$ with $\hat{\theta}=(0,10,0.7)$ best approximates the marginals of $X$. 

In this example, we do not extrapolate the trajectories of $X$ on a wide interval, but we study the performance of the minimization algorithm. That is why the prediction coefficients $\lambda(t)=(\lambda_1(t),\lambda_2(t),\lambda_3(t))$ are computed at points $t\in\{30.3,30.4,30.5,30.6\}$. We do this for minimization problems \eqref{eq:minFctl_noConstraint} and \eqref{eq:minFctl} via stochastic subgradient descent and via standard minimization methods implemented in R language resulting in $\hat{\lambda}_k^g$ and $\hat{\lambda}_k^r,$ $k=u,c$, respectively. The results are given in Tables \ref{TB1} (unconstrained) and \ref{TB2} (constrained).

One can observe that the minimal values of $\bar{\Phi}_2$ and $\bar{\Phi}_3$ increase slightly when the prediction point $t$ moves away from the forecast sample $T_f.$ We see also that the standard implemented minimization methods reach the lower minimal values of $\bar{\Phi}_2$ and $\bar{\Phi}_3.$ However, the stochastic subgradient descent method is much faster and the differences  $\|\hat{\lambda}_u^g(t)-\hat{\lambda}_u^r(t)\|_2$ and $\|\hat{\lambda}_c^g(t)-\hat{\lambda}_c^r(t)\|_2$ are small for the prediction points which are close to the forecast sample.

The values of $\hat{\lambda}_u^g(t)=(0.10490,\, 0.24573,\, 0.49832)$ and $\hat{\lambda}_c^g(t)=(0.12154,\, 0.23066,\, 0.48210)$ at point $t=30.3$ are very close to regression coefficients $\vf.$ Therefore, one can use the reliable prediction method via excursion metric $E_{F_X}$  in the case when   the marginal distribution is not known a-priori and has to be statistically assessed.

\begin{table}[ht]
\footnotesize
\centering
\begin{tabular}{cccccc}
$t$& $\hat{\lambda}_u^g(t)$ & $\bar{\Phi}_2(\hat{\lambda}_u^g(t))$ & $\hat{\lambda}_u^r(t)$& $\bar{\Phi}_2(\hat{\lambda}_u^r(t))$& $\|\hat{\lambda}_u^g(t)-\hat{\lambda}_u^r(t)\|_2$\\
\hline
  30.3 & (0.10490,\, 0.24573,\, 0.49832) & 0.04423 & (0.10832,\, 0.24299,\, 0.49852)& 0.04423 & 0.00438 \\ 
  30.4 & (0.05293,\, 0.22722,\, 0.49585) & 0.06358 & (0.05914,\, 0.22262,\, 0.49288) & 0.06358 & 0.00827 \\ 
  30.5 & (0.04532,\, 0.19385,\, 0.46395) & 0.08082 & (0.06593,\, 0.18834,\, 0.44610) & 0.08076 & 0.02782 \\ 
  30.6 & (0.15467,\, 0.19230,\, 0.28920) & 0.09702 & (0.05452,\, 0.16589,\, 0.41035) & 0.09579 & 0.15938 \\ 
  $\vf$ & (0.10000,\, 0.25000,\, 0.50000) & &   (0.10000,\, 0.25000,\, 0.50000) & & \\ 
   \hline
\end{tabular}
\caption{AR(3) stationary process. Extrapolation coefficients $\hat{\lambda}_u^g$ and $\hat{\lambda}_u^r$ for unconstrained minimization.}
\label{TB1}
\end{table}

\begin{table}[ht]
\centering
\footnotesize
\begin{tabular}{cccccc}
$t$& $\hat{\lambda}_c^g(t)$ & $\bar{\Phi}_3(\hat{\lambda}_c^g(t))$ & $\hat{\lambda}_c^r(t)$& $\bar{\Phi}_3(\hat{\lambda}_c^r(t))$& $\|\hat{\lambda}_c^g(t)-\hat{\lambda}_c^r(t)\|_2$\\
\hline
  30.3 & (0.12154,\, 0.23066,\, 0.48210) & 0.04784 & (0.12081,\, 0.24737,\, 0.47881) & 0.04770 & 0.01705 \\ 
  30.4 & (0.05638,\, 0.23474,\, 0.50164) & 0.05212 & (0.07559,\, 0.22932,\, 0.48422) & 0.05207 & 0.02650 \\ 
  30.5 & (0.33977,\, 0.26291,\, 0.39731) & 0.08588 & (0.10881,\, 0.18561,\, 0.43390) & 0.07623 & 0.24629 \\ 
  30.6 & (-0.04932,\, 0.21927,\, 0.53541) & 0.08633 & (0.08251,\, 0.17675,\, 0.42702) & 0.08513 & 0.17588 \\ 
   $\vf$ & (0.10000,\, 0.25000,\, 0.50000) & &   (0.10000,\, 0.25000,\, 0.50000) & & \\ 
   \hline
\end{tabular}
\caption{AR(3) stationary process. Extrapolation coefficients $\hat{\lambda}_c^g$ and $\hat{\lambda}_c^r$ for constrained minimization.}
\label{TB2}
\end{table}

\section{Summary}
We introduced the new predictors for random variables, processes and fields with possibly infinite moments via the minimization of a functional based on excursion sets. We explored
several advantages of our excursion predictors using theoretical results and computational studies.  Namely, they are computationally fast, consistent for stochastically continuous random fields, and work for random fields without finite moments. The research presented in this paper is introductory and covers only the first important properties of our methods. These results reveal  a  great potential for many real world applications. A further theoretical investigation of our methods including the uniqueness of solutions, accuracy of prediction as well as the improvement of computation routines will be the topic of our next papers.

\bibliographystyle{abbrv}
\bibliography{MakoginLit}
\end{document}